 \theoremstyle{theorem}
 \newtheorem{thm}{Theorem}[section]
 \newtheorem{lem}{Lemma}[section]
\def\C{\mathcal{C}}
\def\T{\mathcal{T}}
\def\tn{{\tilde{n}}}
\def\tJ{{\tilde{J}}}
\def\tE{{\tilde{\mathcal{E}}}}
\def\tV{{\tilde{V}}}
\begin{document}

\title[A full viscous quantum hydrodynamic system]{Stability of a one-dimensional full viscous quantum hydrodynamic system}

\author{Xiaoying Han}

\address{221 Parker Hall, Department of Mathematics and Statistics \\ Auburn University, Auburn, AL 36849 USA}
\ead{xzh0003@auburn.edu}

\author{Yuming Qin}
\address{Department of Mathematics, Institute for Nonlinear Science \\ Donghua University, Shanghai, 201620, China}
\ead{yuming\_qin@hotmail.com}

\author{Wenlong Sun}
\address{School of information and Mathematics \\Yangtze University, Jingzhou, Hubei 434100, China}
\ead{wlsun@yangtzeu.edu.cn}

\vspace{10pt}
\begin{indented}
\item[]March 2023
\end{indented}

\begin{abstract}
A full
viscous quantum hydrodynamic system for particle density, current
density, energy density and electrostatic potential coupled with a
Poisson equation in one dimensional bounded intervals is studied.  First,  the existence and uniqueness of a steady-state solution to
the quantum hydrodynamic system is established.  Then, utilizing the fact that the third
order perturbation term has an appropriate sign,  the
local-in-time existence of the solution is investigated by introducing a fourth
order viscous regularization and using the entropy dissipation
method.   In the end, the exponential
stability of the steady-state solution is shown by constructing a uniform
a-priori estimate.
\end{abstract}

\vspace{2pc} \noindent{\it Keywords}: viscous quantum hydrodynamic
system, exponential stability, uniform estimate, local-in-time
solution, entropy dissipation method.

%
%
%

\section{Introduction}

The quantum hydrodynamic (QHD)  model  has been widely used to study semiconductor
devices of ultra small size, e.g.,  nano-size, where the effects of
quantum mechanics arise.   In particular, they play a crucial role in
describing the motion of electron or hole transport under self-consistent electric fields.  The advantage of macroscopic
QHDs lies in that they provide  descriptions of
the dynamic evolution of physical observables  and
thus can be utilized to simulate quantum phenomena directly.   Moreover, in the
semiclassical, i.e.,  the zero dispersion limit, the macroscopic
quantum quantities such as density, momentum, and temperature, are
shown to converge in some sense to the Newtonian fluid-dynamical
quantities \cite{GLM00}.        Macroscopic quantum models are also
used in other physical area such as superfluid \cite{LM93} and
superconductivity \cite{F72}.

This work is based on a full viscous QHD that can be derived from the Wigner equation with the Fokker-Plank
collision operator \cite{JM07}.   More precisely, consider  the following
equations on a bounded domain $\Omega \subset \mathbb{R}^3$:
\begin{eqnarray}\label{1.1}
\hspace{-1in} \left\{
  \begin{array}{ll}
  \displaystyle \partial_t n + \nabla \cdot J = \nu \Delta n, \medskip \\
   \displaystyle \partial_t J + \nabla \cdot \left( \frac{J\otimes J}{n} \right)
    +  \nabla \cdot P
    - n \nabla V
        = - \frac{J}{\tau}
     + \nu\Delta J
     - \mu\nabla n, \medskip \\
   \displaystyle \partial_t \mathcal{E}
    + \nabla \cdot \left( \frac{J}{n}( \mathcal{E}+P) \right)
    - J\cdot \nabla V
   = - \frac{2}{\tau}\left( \mathcal{E}-\frac32 n \right)
     + \nu\Delta \mathcal{E}
     - \mu \nabla \cdot J,
  \end{array}
 \right.
\end{eqnarray}
where $n>0, J$ and $ \mathcal{E}$ denote the particle density, the current
density and the energy density, respectively.    The
 parameters $\nu > 0$, $\epsilon > 0$, $\tau > 0$ and $\mu$  stand for the scaled
viscosity constant, the Planck constant, the relaxation time, and the
interaction constant, respectively, and the term $J\otimes J$ denotes the matrix with
components $J_jJ_k$.

Here the stress tensor $P$ and energy density $ \mathcal{E}$ can be
expressed as
$$
  P = nTI - \frac{\epsilon^2}{12}n(\nabla\otimes\nabla)\log n, \quad
 \mathcal{E}= \frac{|J|^2}{2n} + \frac32 nT
      - \frac{\epsilon^2}{24}n\Delta\log n,
$$
where $T$ is the particle temperature and $I$ denotes the
identity matrix.   The function $V=V(x,t)$ represents the
electric potential that is self-consistently coupled with the Poisson equation $$ \lambda^2 \Delta V
    = n - \rho(x),$$ where $\lambda$ is the scaled Debye length and $\rho(x)$ is the doping profile modeling
the semiconductor device under consideration \cite{J01,MRS90}.

It is worth mentioning   that the viscous terms $\nu \Delta n, \nu\Delta J$ and
$\nu\Delta \mathcal{E}$ on the right hand sides in the system (\ref{1.1}) are formally derived from the Wigner-Fokker-Planck
model.  They are not ad-hoc regularization of the QHD model. The above viscous regularization is different
from the viscous terms in the classical Navier-Stokes equations
since it models the interactions of electrons and phonons in a
semiconductor crystal.   One basic character of the QHD model
is that the energy density equation consists of an additional
quantum correction term of the order $\mathcal{O}(\epsilon)$ introduced first
by Wigner \cite{W32} in 1932.    Besides,  the stress tensor contains
an additional quantum correction part \cite{AT87,AI89} related
to the quantum Bohm potential (or internal self-potential)
\cite{B52} $$\displaystyle Q(n) =
\frac{\epsilon^2}{2}\frac{\Delta\sqrt{n}}{\sqrt{n}}\quad \mbox{ through the
formula}\quad \displaystyle \nabla Q = \frac{\epsilon^2}{4n}\nabla\cdot
(n(\nabla\otimes\nabla){\rm log}n).$$

Many efforts have been contributed to the mathematical analysis for
the QHD model in recent years,   on the steady-state solutions as
well as  evolutionary solutions.   Most studies concern
 isentropic non-viscous QHD models in one
dimensional or multidimensional bounded domain with various types of
boundary conditions.   In particular, for one dimensional
evolutionary QHD model, the local-in-time existence of solutions
\cite{HJL03} and global existence theory with the exponential
stability of stationary state in whole space \cite{HLM06,JL04} were
established.  For multidimensional evolutional QHD model, the local
existence theory and exponential stability of equilibrium state
analysis were also investigated for ir-rotational fluid on spatial
periodic domain in \cite{LM04}. The corresponding existence theory
for time-dependent solution for general rotational fluid is usually
difficult and was obtained in \cite{HLMO10}, where the exponential
decay to the stationary state obtained therein was made. Moreover,
the asymptotical small scaling analysis including the relaxation
time limit and small Debye length limit for the global solutions
were studied in \cite{JLM06} and \cite{LL05}, respectively.   In
\cite{AM09}, Antonelli and Marcati proved the global existence of
the finite energy weak solutions. They also established the
existence of the global weak solution in two dimensional case in
\cite{AM12}. For viscous isentropic QHD model, based on the entropy
dissipation method, the exponential decay toward a constant steady
state in one dimensional setting with a certain boundary condition
was proved in \cite{GJT03}.  In the multidimensional case, the
isentropic viscous model was investigated in \cite{CD11,D08}, where
the authors proved the local existence of smooth solutions and
exponential decay to the thermal equilibrium state by the entropy
dissipation method, also they obtained the inviscid limit.
 {\color{blue} More
recently, the references \cite{HZ} and \cite{RH21} studied
non-isentropic non-viscous quantum hydrodynamic system for
semiconductors in one-dimensional bounded domain and $\mathbb{R}^3$,
respectively. Particularly, the authors in \cite{HZ} investigated
the existence and asymptotic stability of a stationary solution by
Leray-Schauder fixed point theorem, the refined energy method and
the iteration method with the energy estimates, and the authors in
\cite{RH21} proved the existence, uniqueness and exponential decay
of strong solution near constant states by using energy method when
the coefficients meet certain conditions. Here we want to mention
that due to the appearance of the three order quantum terms, the a
priori estimates of solutions to non-isentropic non-viscous quantum
hydrodynamic system cannot be closed in space
$\mathbb{H}^m(\mathbb{R}^d)$ $(m \geqslant 0)$, so the authors in
\cite{RH21} took the {\rm curl} of the momentum conservation
equation and established a prior estimates of the solutions using
the fact ${\rm rot} \nabla \cdot =0$ and the curl-div decomposition
of the gradient, which makes the absence of the $L^2$-estimate of
velocity filed in the a priori estimates. }

{\color{blue}

However, to the best of our knowledge, there is no analysis in the
literature on mathematical analysis  of the {\it full viscous} QHD
model for semiconductors except Reference \cite{SLH23}, in which the
authors investigated the existence and exponential stability of the
stationary solution for the full viscous QHD model on the real set
$\mathbb{R}$ by the standard continuation argument based on the
local existence and the uniform a priori estimate. Since the
techniques for classical hydrodynamic equations are not applicable
due to the quantum term, the existence of a local-in-time solution
is obtained in \cite{SLH23} by showing the existence of
local-in-time solutions of a reformulated system via the iteration
method. }

In this paper, we consider the full viscous QHD model in a one
dimensional bounded interval and investigate the existence,
uniqueness and the stability of steady-state solutions. Without loss
of generality, we consider the following
 one dimensional  scaled system on the
bounded interval $[0,1]$ derived from (\ref{1.1})
\begin{eqnarray}\label{1.2}
 \hspace{-0.8in} \left \{
  \begin{array}{ll}
   n_t + J_x - \nu n_{xx} = 0,   \medskip\\
  \displaystyle
   J_t + \frac{J}{\tau} - \nu J_{xx}
    + \frac23 \left(\frac{J^2}{n}\right)_x
    + \frac23 \mathcal{E}_x
    - \frac{\epsilon^2}{9}n \left(\frac{(\sqrt{n})_{xx}}{\sqrt{n}}\right)_x
    - nV_x + \mu n_x
   = 0,    \medskip\\
  \displaystyle
   \mathcal{E}_t + \frac53 \left(\frac{J}{n}\mathcal{E}\right)_x
    + \frac2{\tau}\mathcal{E}
    - \nu\mathcal{E}_{xx}
    - \frac13\left(\frac{J^3}{n^2}\right)_x
    - \frac{\epsilon^2}{18}\left(\frac{Jn_{xx}}{n}-\frac{Jn_x^2}{n^2}\right)_x
     \medskip \\
    \qquad\qquad\qquad\qquad\qquad\qquad\qquad\qquad\quad
    \displaystyle - JV_x+ \mu J_x   - \frac{3}{\tau}n
   = 0,  \medskip \\
  \displaystyle
   \lambda^2 V_{xx} = n - \rho(x),
  \end{array}
 \right.
\end{eqnarray}
equipped with   the boundary conditions
\begin{eqnarray}\label{1.4}
  \hspace{-0.8in} \left\{
  \begin{array}{ll}
   \partial_x^k n(0) = \partial_x^k n(1)
    = \partial_x^k {\rho}(0) = \partial_x^k{\rho}(1),\ \ k=0,1,2,3,
    \medskip \\
   \displaystyle J(0) = J(1) = J_b, \ \
   \mathcal{E}(0) = \mathcal{E}(1) = \frac32\rho(0),  \medskip \\
   V(0)=0, \ \ V(1)=V_b, \ \ V_x(0)=V_x(1)=0
  \end{array}
 \right.
\end{eqnarray}
and the
initial  conditions
\begin{eqnarray}\label{1.3}
  \hspace{-1in} n(x,0)=n_0(x)>0, \  J(x,0)=J_0(x), \
 \mathcal{E}(x,0)=\mathcal{E}_0(x),  \ V(x,0)=V_0(x),  \ x \in
 [0,1].
\end{eqnarray}

{\color{blue} The major difficulty in studying the full QHD model is
the dispersive and complex structure of the model.  Another
difficulty is due to the Bohm potential term $Q(n)$, that introduces
a third order perturbation to the Euler-Poisson system,  for which
maximum principles and related tools become inapplicable.   There
have been several attempts in the literature to deal with the
quantum term. Integrating the stationary momentum equation leads to
a second order differential equation to which maximum principle
arguments can be applied \cite{GJ01}.  A fourth order wave equation
is obtained after differentiating the equation with respect to the
spatial variable. This method was employed in \cite{LM04,HLM06} to
prove the existence of global solutions to the isentropic quantum
hydrodynamic model with $\nu=0$, but only for initial data close to
thermal equilibrium. A wave function polar decomposition method was
used in \cite{AM09} to investigate the global existence of solutions
to the inviscid isentropic model with nonnegative particle density.
In \cite{GJV09}, the Faedo-Glerkin method was applied to show the
existence of solution of the viscous isentropic system for $\epsilon
> 0$. The idea of \cite{CD07} was to introduce a bi-Laplacian
regularization in the viscous isentropic model and to employ energy
estimates to establish local existence of solution.

Different from the above works, the idea of the proof in this work
is summarized as follows. To prove the existence of the steady-state
solution, i.e., Theorem \ref{T1.1}, we first linearized the system
(\ref{1.6}) by taking full advantage of the viscous term, the sign
of the third term and the structure of the system.  Then we set up
a-priori estimates by using energy method and finally, we
established the existence of solution via continuity argument.  To
show the local-in-time existence of the solution, i.e., Theorem
\ref{T1.3}, based on the observation that the third order term has
an appropriate sign, we introduced a fourth order viscous
regularization and employed energy estimates to obtain the local
existence of solution. In the end, to investigate the long time
stability of the steady-state solution, i.e., Theorem \ref{T1.2}, we
mainly focus on establishing an upper bound for the solutions. }

\subsection{Notations and standing assumptions}
Denote by $L^p(\mathbb{R})$ and $W^{m,p}(\mathbb{R})$ the usual
Lebesgue space and Sobolev space endowed with norms
$\|\cdot\|_{L^p}$ and $\|\cdot\|_{m,p}$, respectively. For example,
$$
 \|\varphi\|_{L^p}
 := \left(\int_{\mathbb{R}} |\varphi|^p{\mathrm d}x \right)^{1/p}
  = \left(\int |\varphi|^p{\mathrm d}x \right)^{1/p},
$$
and
$$
 \|\varphi\|_{m,p}
 := \left(\sum\limits_{|\beta|\leqslant m}
    \int_{\mathbb R} |D^\beta \varphi|^p{\mathrm d}x \right)^{1/p}
  = \left(\sum\limits_{|\beta|\leqslant m} \int |D^\beta \varphi|^p{\mathrm d}x \right)^{1/p}.
$$
In particular, we denote $H^m(\mathbb{R}):=W^{m,2}(\mathbb{R})$ with
the norm $\|\cdot\|_m$.  Based on the above notations, we further
denote by $L^p(I; X)$ the
 space of strongly measurable functions on
the closed interval $I$, with values in a Banach space $X$,
 endowed with norm
$$\|\varphi\|_{L^p(I; X)}
  := \left(\int_I \|\varphi\|^p_X {\mathrm d}t \right)^{1/p},\ \ \mbox{for }1\leqslant
p<\infty,$$
and denote by $\C(I; X)$
 the space of continuous functions on the interval $I$, with values
 in the Banach space $X$, endowed with
  the usual norm.   Throughout this paper,
the notations $C$ and $C_i$ for $i=1,2,\cdots$ are assigned to
denote positive generic constants.

In this work the doping profile $\rho(x)$ is assumed to satisfy the
following compatibility conditions:
\begin{itemize}
\item[\textbf{(A1)}] \enskip
 $\rho(0)=\rho(1)=\rho_b, \quad
 0 < \rho(x)\in \C^4([0,1]),
 \quad \delta_0 := \|\rho_x\|_3.$
\end{itemize}

The following technical assumption is required to
establish the existence and uniqueness of a steady-state solution to the boundary value problem
  (\ref{1.2}) -- (\ref{1.4}).
\begin{itemize}
\item[\textbf{(A2)}] \enskip
There exists a constant $\alpha_0 \, \in (0,1)$, such that
$$
\frac{1}{\alpha_0}
 \ < \
  \frac{\sqrt{2}\tau\nu(2\mu+5)\rho(x)}{3\lambda^2}, \quad x \in [0,1].
$$
\end{itemize}

\subsection{Main results}
There are three main results of this work, the existence and stability of the steady-state solution of the boundary value problem
  (\ref{1.2}) -- (\ref{1.4}) stated in Theorem \ref{T1.1} and \ref{T1.2} below, respectively,  and  the local-in-time existence of the
solution to the intial-boundary value problem   (\ref{1.2}) -- (\ref{1.3}) stated in Theorem \ref{T1.3} below.
\begin{thm}\label{T1.1}
Let Assumptions  $\mathbf{(A1)}-\mathbf{(A2)}$   hold.  Then  the
BVP (\ref{1.2}) -- (\ref{1.4}) possesses a unique steady-state
solution $(n^*(x), J^*(x), \mathcal{E}^*(x), V^*(x))$ provided $J_b$
and $\delta_0$ sufficiently small. Moreover, the steady-state
solutions satisfies
\begin{eqnarray}\label{1.8}
 \|n^*-\rho\|_3^2 + \|J^*-J_b\|_2^2
  + \|\mathcal{E}^*-\frac32\rho\|_2^2
  + \|V^*\|_5^2
 \leqslant
C \delta_0^2,
\end{eqnarray}
where $C$ is a positive constant independent of $\epsilon$
and $\delta_0$.
\end{thm}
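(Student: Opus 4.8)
The plan is to reformulate the stationary problem as a fixed-point problem for a carefully linearized system and then close an a-priori estimate uniform in $\epsilon$. First I would write the steady-state equations obtained from (\ref{1.2}) by dropping the time derivatives: $J^*_x = \nu n^*_{xx}$, together with the stationary momentum and energy balances and the Poisson equation $\lambda^2 V^*_{xx} = n^* - \rho$. The first equation, integrated once with the boundary condition $J^*(0)=J_b$, gives $J^* = J_b + \nu n^*_x$, which lets me eliminate $J^*$ entirely in favour of $n^*$. Writing $n^* = \rho + \sigma$, $\mathcal{E}^* = \tfrac32\rho + \eta$, $V^*$ as the new unknowns, the system becomes a coupled system for $(\sigma, \eta, V^*)$ whose right-hand side is $O(\delta_0) + O(J_b) + (\text{quadratic in the unknowns})$; the boundary conditions (\ref{1.4}) make all of $\sigma, \sigma_x, \sigma_{xx}, \sigma_{xxx}$, $\eta$, $V^*$ vanish at the endpoints (with $V^*(1) = V_b$, which I would absorb by subtracting a fixed affine function — or note that the compatibility in (\ref{1.4}) together with $V_x(0)=V_x(1)=0$ forces the relevant corrections to be controlled).

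The key structural observation, emphasized in the introduction, is the sign of the Bohm term: the contribution $-\tfrac{\epsilon^2}{9} n^* ((\sqrt{n^*})_{xx}/\sqrt{n^*})_x$ in the momentum equation, after multiplying by the appropriate test function (essentially $\sigma$ or $\sigma_{xx}$) and integrating by parts, produces a nonnegative quadratic term of the form $c\,\epsilon^2 \|\sigma_{xx}\|^2$ (plus lower-order junk), so it helps the estimate rather than obstructing it; this is exactly why the final bound (\ref{1.8}) can be made independent of $\epsilon$ — the $\epsilon$-dependent energy only appears on the "good" side. So the linearization strategy is: fix $(\bar\sigma, \bar\eta, \bar V)$ in a small ball $B_R$ of $H^3\times H^2\times H^5$ of radius $R = C_0\delta_0$, freeze the nonlinear coefficients (e.g. replace $n^*$ by $\rho+\bar\sigma$ in the denominators and in the viscous/quantum coefficients), solve the resulting \emph{linear} elliptic system for $(\sigma,\eta,V)$ — its principal part is, after the substitution $J^* = J_b + \nu n^*_x$, essentially a fourth-order operator in $\sigma$ (from $\nu$ times the quantum term) coupled to second-order operators in $\eta$ and $V^*$ — using Lax–Milgram in a suitable weighted energy space, and then apply Schauder's fixed-point theorem once I show the solution map sends $B_R$ into itself and is continuous/compact. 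Assumption \textbf{(A2)} is precisely the smallness/positivity condition that guarantees coercivity of the linearized bilinear form: the inequality $1/\alpha_0 < \sqrt2\,\tau\nu(2\mu+5)\rho/(3\lambda^2)$ is what makes the combination of the relaxation terms $J^*/\tau$, $2\mathcal{E}^*/\tau$, the viscosity $\nu$, and the Poisson coupling $\lambda^{-2}$ dominate the indefinite convection and potential terms.

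For the a-priori estimate I would test the linearized momentum equation against $\sigma$ and $\sigma_{xx}$ (to capture $\|\sigma\|_1$ and the $\epsilon^2\|\sigma_{xx}\|^2$ from the Bohm term, then bootstrap to $\|\sigma\|_3$ using the equation itself and elliptic regularity for the Poisson equation, which gives $\|V^*\|_{5}\lesssim\|\sigma\|_3$), test the linearized energy equation against $\eta$ and $\eta_{xx}$ to get $\|\eta\|_2$ controlled by $\nu^{-1}(\|J_b\|+\delta_0 + R^2)$, handling the convection terms $\tfrac53(\tfrac{J^*}{n^*}\mathcal{E}^*)_x$ and $JV_x$ by Sobolev embedding $H^1\hookrightarrow L^\infty$ on $[0,1]$ and absorbing the top-order pieces into the viscous dissipation. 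Collecting everything yields an estimate of the form $\|\sigma\|_3^2+\|\eta\|_2^2+\|V^*\|_5^2 \le C(\delta_0^2 + J_b^2 + R^4)$; choosing $R = C_0\delta_0$ with $C_0$ large and then $\delta_0, J_b$ small makes $C(\delta_0^2+J_b^2+R^4)\le R^2$, giving self-mapping, and uniqueness follows by a standard energy estimate on the difference of two solutions, again using \textbf{(A2)} for coercivity. The main obstacle I anticipate is organizing the fourth-order-in-$\sigma$ linear system so that coercivity is transparent: the quantum term is only nonnegative \emph{at the quadratic level after integration by parts}, and one has to be careful that the lower-order commutator terms generated by the $n^*$-weight (i.e. writing $n^*((\sqrt{n^*})_{xx}/\sqrt{n^*})_x$ in divergence form) are genuinely lower order and absorbable — handling these cross terms and verifying the boundary terms vanish under (\ref{1.4}) is where the real work lies, everything else being routine energy estimates.
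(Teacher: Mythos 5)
Your overall architecture --- perturbing around $(\rho, J_b, \tfrac32\rho)$, using the viscous continuity equation to get $J^* = J_b + \nu n^*_x$, closing an energy estimate under \textbf{(A2)}, and then running a fixed-point argument --- matches the paper's proof in outline (the paper uses the method of continuity plus a Banach contraction where you propose Lax--Milgram plus Schauder, but that difference is inessential). However, there is a genuine gap at the very point you identify as the heart of the matter: the extraction of the coercive term from the Bohm potential. After eliminating $J^*$, the principal part of the momentum equation in $\sigma = n^*-\rho$ is the \emph{third}-order operator $-(\nu^2+\tfrac{\epsilon^2}{18})\sigma_{xxx}$ (not a fourth-order one as you state), and with the boundary conditions $\partial_x^k\sigma(0)=\partial_x^k\sigma(1)=0$, $k=0,\dots,3$, testing it against $\sigma$ gives $\int_0^1\sigma_{xx}\sigma_x\,{\rm d}x = \tfrac12\big[\sigma_x^2\big]_0^1 = 0$ while testing against $\sigma_{xx}$ gives $-\tfrac12\big[\sigma_{xx}^2\big]_0^1=0$: both of your proposed multipliers annihilate the principal term, so the claimed $c\,\epsilon^2\|\sigma_{xx}\|^2$ never appears and the quadratic form you would feed to Lax--Milgram or to the self-mapping estimate is not coercive at top order. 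The correct multiplier is the odd-derivative one, $\sigma_x$ --- equivalently $q = J^*-J_b = \nu\sigma_x$ --- which is exactly what the paper does: it keeps $q$ as a separate unknown, uses $q=\nu p_x$ to rewrite the quantum term as $-\tfrac{\epsilon^2}{18\nu}q_{xx}$ (plus controllable remainders), and tests the momentum equation against $q$ to produce $(\nu+\tfrac{\epsilon^2}{18\nu})\|q_x\|^2$ on the good side.

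A second, related point: the coercivity cannot be obtained equation by equation as you describe, because the indefinite cross terms $\tfrac23\int qr_x$, $(\mu+\tfrac52)\int q_x r$ and $-\tfrac{3}{\tau}\int pr$ couple the momentum, energy and Poisson blocks. The paper handles this by weighting the multipliers ($d_2p$ for the continuity equation, $\nu\rho V^*$ for the Poisson equation, $\beta r$ for the energy equation with $\beta$ constrained, and the overall factor $C=\tfrac{3\sqrt2}{2}(\mu+\tfrac52)\beta$), and it is precisely in checking that the resulting full quadratic form in $(p,q,r,V^*)$ is positive definite that Assumption \textbf{(A2)} is used; your proposal acknowledges that \textbf{(A2)} "guarantees coercivity" but supplies no mechanism. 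Once the multipliers and the weighted combination are corrected, the remainder of your plan (elliptic regularity for $V^*$, bootstrap to $H^3\times H^2\times H^2\times H^5$, smallness of $\delta_0$ and $J_b$ to close the ball, uniqueness via an energy estimate on differences) is essentially the paper's argument.
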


To investigate  the global-in-time stability
of the steady-state, define
$$d_n=n_0-n^*, \ d_J=J_0-J^*, \
d_\mathcal{E}=\mathcal{E}_0-\mathcal{E}^*, \ d_V=V_0-V^*.
$$
\begin{thm}\label{T1.3}
Suppose that
 $(d_n,d_J,d_\mathcal{E})\in H^3(0,1)\times H^2(0,1)\times H^2(0,1)$ and
the doping profile $\rho(x)$ satisfies Assumption  $\mathbf{(A1)}$.
Then the IBVP (\ref{1.2}) -- (\ref{1.3}) has a local solution $(n,
J, \mathcal{E}, V)$ for $t \in (0, t^*)$ satisfying
\begin{eqnarray*}
  n &\in& H^1(0,t^*; H^2(0,1)) \cap L^2(0,t^*; H^4(0,1)), \\
   V &\in& H^1(0,t^*; H^4(0,1)) \cap L^2(0,t^*; H^6(0,1)),  \\
   J &\in& H^1(0,t^*; H^1(0,1)) \cap L^2(0,t^*; H^3(0,1)), \\
   \mathcal{E} &\in& H^1(0,t^*; H^1(0,1)) \cap L^2(0,t^*; H^3(0,1)).
\end{eqnarray*}
\end{thm}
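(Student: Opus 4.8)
The plan is to prove Theorem \ref{T1.3} by introducing a fourth-order viscous regularization of the system (\ref{1.2}), establishing the existence of solutions to the regularized problem, deriving uniform (in the regularization parameter) a priori estimates using the entropy dissipation structure, and passing to the limit. More precisely, I would first reformulate the problem in terms of the perturbation variables $(d_n, d_J, d_\mathcal{E}, d_V)$ around the steady state $(n^*, J^*, \mathcal{E}^*, V^*)$ from Theorem \ref{T1.1}, so that the boundary and compatibility conditions become homogeneous and the steady-state equations can be subtracted from (\ref{1.2}). This recasts the task as a local existence result for the perturbation system with zero boundary data. The key structural observation, already emphasized in the introduction, is that the third-order Bohm-type term $-\frac{\epsilon^2}{9}n\big((\sqrt n)_{xx}/\sqrt n\big)_x$ in the momentum equation, when tested appropriately, produces a term with a favorable sign (essentially $\epsilon^2 \| n_{xx}\|^2$-type control after suitable manipulation), so it should not be fought but exploited.

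The second step is the regularization. I would add a term $-\delta\, n_{xxxx}$ (or $+\delta\,\partial_x^4$ with the correct sign for parabolicity) to the continuity equation — and correspondingly regularize the other equations so the system remains a well-posed parabolic-type system — giving a family of approximate problems indexed by $\delta>0$. For fixed $\delta$, the regularized system is a quasilinear parabolic system (the fourth-order term being the principal part for $n$, and the $\nu$-viscosity terms the principal parts for $J$ and $\mathcal{E}$), so local existence of a smooth solution on a $\delta$-dependent time interval follows from a standard fixed-point/Galerkin argument in the appropriate Sobolev spaces, using assumption $\mathbf{(A1)}$ for the regularity of $\rho$ and the positivity $n_0>0$ to keep $n$ bounded away from zero on a short time interval.

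The third and central step is to derive a priori estimates that are uniform in $\delta$ and valid on a time interval $(0,t^*)$ independent of $\delta$. Here I would run the entropy dissipation method: multiply the continuity equation by a suitable function of $n$ (and by $n_{xx}$, $\partial_x^4 n$ after differentiating), the momentum equation by $J/n$ and its derivatives, and the energy equation by appropriate multiples, then integrate by parts over $[0,1]$. The viscous terms $\nu n_{xx}, \nu J_{xx}, \nu \mathcal{E}_{xx}$ yield the dissipation $\nu(\|n_x\|^2 + \|J_x\|^2 + \|\mathcal{E}_x\|^2)$ and, after differentiation, higher derivatives; the quantum term contributes $c\,\epsilon^2\|n_{xx}\|^2$ or $c\,\epsilon^2 \|\partial_x^3 \sqrt n\|^2$ with the right sign; and the Poisson equation lets one control $V$ and its derivatives in terms of $n-\rho$ via elliptic regularity (here the homogeneous Neumann data $V_x(0)=V_x(1)=0$ together with $V(0)=0$ is used). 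The nonlinear terms — in particular $(J^2/n)_x$, $(J^3/n^2)_x$, the mixed quantum term $\big(Jn_{xx}/n - Jn_x^2/n^2\big)_x$ in the energy equation, and the coupling $nV_x, JV_x$ — must be estimated by Sobolev embedding ($H^1(0,1)\hookrightarrow L^\infty$), the Gagliardo–Nirenberg inequality, and Young's inequality, keeping $n$ bounded below (which is propagated on a short time by the estimates themselves via a bootstrap/continuity argument). This produces a differential inequality of the form $\frac{d}{dt}\mathcal{Y}(t) + (\text{dissipation}) \le C\,\mathcal{Y}(t)^p$ for an energy functional $\mathcal{Y}$ comprising $\|n-\rho\|_3^2 + \|J-J_b\|_2^2 + \|\mathcal{E}-\tfrac32\rho\|_2^2$ (plus regularization-dependent lower-order terms that drop out), from which a uniform time $t^*>0$ and uniform bounds follow.

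The final step is compactness: the uniform $\delta$-independent bounds give, via Aubin–Lions, strong convergence of a subsequence $(n^\delta, J^\delta, \mathcal{E}^\delta)$ in spaces strong enough to pass to the limit in all nonlinear terms (in particular to identify limits of $(J^\delta)^2/n^\delta$, etc., using $n^\delta \ge c>0$ uniformly), so the limit solves (\ref{1.2})--(\ref{1.3}) and lies in the asserted regularity classes $H^1(0,t^*;H^2)\cap L^2(0,t^*;H^4)$ for $n$ and so on; the regularity of $V$ then follows from the Poisson equation and that of $n$, and the regularity of $J, \mathcal{E}$ from their parabolic equations. The main obstacle I anticipate is the $\delta$-uniform a priori estimate for the energy equation: the energy density $\mathcal{E}$ carries a quantum correction $-\frac{\epsilon^2}{24}n\Delta\log n$ hidden in its definition and the mixed third-order term $\big(Jn_{xx}/n - Jn_x^2/n^2\big)_x$, so closing the estimate requires carefully combining the energy-equation dissipation with the quantum dissipation already extracted from the momentum equation, and arranging the multipliers so that the top-order terms cancel rather than accumulate; getting the signs and the balance of derivative counts right there is the delicate point, whereas the regularization and compactness steps are comparatively routine.
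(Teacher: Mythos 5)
Your proposal follows essentially the same route as the paper: rewrite the system in perturbation variables around the steady state of Theorem \ref{T1.1}, add a fourth-order ($\sigma$-, in your notation $\delta$-) regularization to make the system parabolic, derive $\sigma$-uniform energy estimates by exploiting the favorable sign of the third-order quantum term (the paper converts $\frac{\epsilon^2}{18}\int \tilde n_{xxx}\tilde J\,{\rm d}x$ via the continuity equation into $-\frac{\nu\epsilon^2}{18}\|\tilde n_{xx}\|^2$ plus a time derivative, exactly the mechanism you describe), and pass to the limit by weak-star compactness and Aubin--Lions. The only cosmetic difference is that the paper uses linear multipliers $\mu\tilde n$, $\tilde J$, $\frac{2}{3\mu}\tilde{\mathcal E}$ on the perturbation system rather than the nonlinear entropy-type multipliers you suggest, but the structure and outcome of the argument are the same.
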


\begin{thm}\label{T1.2}
Let  Assumptions  $\mathbf{(A1)}-\mathbf{(A2)}$ hold, and let
$(n^*(x), J^*(x), \mathcal{E}^*(x), V^*(x))$ be the unique
steady-state solution of the BVP (\ref{1.2}) -- (\ref{1.4}).  In
addition, assume that \begin{eqnarray}\label{newcond1}\displaystyle
\frac{\tau(2+\tau\nu)(2\mu+5)(\mu\nu\lambda^2+\nu\rho(x))}{3\lambda^2}>1,
\quad x \in [0,1].\end{eqnarray} Then the local-in-time solution
$(n(x, t), J(x, t), \mathcal{E}(x, t), V(x, t))$ of the IBVP
(\ref{1.2})--(\ref{1.4}) obtained in Theorem \ref{T1.3} actually
exists globally in time and satisfies
\begin{eqnarray}\label{1.9}
&&\|n(t)-n^*\|_3^2 + \|J(t)- J^*\|_2^2
 + \|\mathcal{E}(t)-\mathcal{E}^*\|_2^2
 + \lambda^2 \|V(t)-V^*\|_5^2 \nonumber \\[1.2ex]
\leqslant &&
Ce^{- \varsigma_1 t}
    \big( \|d_n\|_3^2 + \|d_J\|_2^2 + \|d_\mathcal{E}\|_2^2
                 + \|d_V\|_3^2 \big),
\end{eqnarray}
for some positive constants $\varsigma_1$ and
$C$ provided
there exists $\varsigma_0 > 0$ such that  $$\|d_n\|_3^2 + \|d_J\|_2^2 +
\|d_\mathcal{E}\|_2^2 + \|d_V\|_3^2 \leqslant \varsigma_0.$$
\end{thm}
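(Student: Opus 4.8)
\medskip
\noindent\textbf{Proof strategy.}
The plan is to prove Theorem \ref{T1.2} by a continuation (continuity) argument: Theorem \ref{T1.3} provides a local-in-time solution in the stated regularity class, so it suffices to establish a \emph{uniform} a priori estimate that already carries the exponential decay (\ref{1.9}). Introduce the perturbations
$$
 \phi_n := n-n^*,\quad \phi_J := J-J^*,\quad \phi_{\mathcal E} := \mathcal{E}-\mathcal{E}^*,\quad \phi_V := V-V^*,
$$
and subtract the stationary equations from (\ref{1.2}) to get the system governing $(\phi_n,\phi_J,\phi_{\mathcal E},\phi_V)$. Because $n$ and $n^*$ share the boundary data in (\ref{1.4}) up to order three, and $J,J^*$, $\mathcal{E},\mathcal{E}^*$, $V,V^*$ share the lower-order boundary data, the perturbations satisfy homogeneous Dirichlet conditions ($\partial_x^k\phi_n=0$ at $x=0,1$ for $k=0,\dots,3$; $\phi_J,\phi_{\mathcal E},\phi_V$ vanish at $x=0,1$), so the Poincar\'e inequality is available at every level. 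From the Poisson relation $\lambda^2(\phi_V)_{xx}=\phi_n$ and the boundary data, elliptic regularity gives $\|\phi_V\|_5\leqslant C\|\phi_n\|_3$; hence the Lyapunov functional
$$
 E(t) := \|\phi_n(t)\|_3^2+\|\phi_J(t)\|_2^2+\|\phi_{\mathcal E}(t)\|_2^2+\lambda^2\|\phi_V(t)\|_5^2
$$
is equivalent, up to constants, to $\|\phi_n\|_3^2+\|\phi_J\|_2^2+\|\phi_{\mathcal E}\|_2^2$, and $E(0)\leqslant C\big(\|d_n\|_3^2+\|d_J\|_2^2+\|d_{\mathcal E}\|_2^2+\|d_V\|_3^2\big)$.

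Next I would carry out the energy estimates. Differentiating the $\phi_n$-equation up to three times and the $\phi_J$- and $\phi_{\mathcal E}$-equations up to two times, testing against the corresponding derivatives and summing, the viscous terms $\nu n_{xx}$, $\nu J_{xx}$, $\nu\mathcal{E}_{xx}$ produce a dissipation $D(t)$ bounding $c\big(\|(\phi_n)_x\|_3^2+\|(\phi_J)_x\|_2^2+\|(\phi_{\mathcal E})_x\|_2^2\big)$ from below, together with the nonnegative relaxation terms $\tfrac1\tau\|\phi_J\|_2^2$, $\tfrac2\tau\|\phi_{\mathcal E}\|_2^2$. The delicate third-order Bohm term is treated exactly as in the proof of Theorem \ref{T1.1}: expanding $n\big((\sqrt n)_{xx}/\sqrt n\big)_x=\tfrac12 n_{xxx}-n_xn_{xx}/n+\tfrac12 n_x^3/n^2$ and linearizing, its leading contribution $-\tfrac{\epsilon^2}{18}(\phi_n)_{xxx}$ in the momentum balance becomes, after one integration by parts and substitution of $(\phi_J)_x=-(\phi_n)_t+\nu(\phi_n)_{xx}$ from the continuity equation, exactly $\tfrac{\epsilon^2}{36}\tfrac{d}{dt}\|(\phi_n)_x\|^2+\tfrac{\epsilon^2\nu}{18}\|(\phi_n)_{xx}\|^2$ — a positive time derivative to be absorbed into $E(t)$ plus extra nonnegative dissipation; this is the meaning of ``the third order term has an appropriate sign''. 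The same computation works at the differentiated levels, and the quantum term $-\tfrac{\epsilon^2}{18}(J(\log n)_{xx})_x$ of the energy balance contributes a leading part proportional to $J_b$ that is absorbed into the viscous dissipation. All remaining quantum contributions are genuinely lower order and are estimated using $H^1\hookrightarrow L^\infty$ in one dimension together with the smallness of $\|\phi_n\|_3$ and of $J_b,\delta_0$.

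It then remains to dispose of the electric coupling and the nonlinearities. The term $nV_x-n^*V_x^*=\phi_n V_x^*+n^*(\phi_V)_x+\phi_n(\phi_V)_x$ of the momentum balance contains the genuinely non-local piece $n^*(\phi_V)_x$; I would add the electric energy $\tfrac{\lambda^2}{2}\|(\phi_V)_x\|^2$ to the Lyapunov functional and use the Poisson relation together with charge neutrality $\int_0^1\phi_n\,dx=0$ (forced by the boundary conditions on $V_x$) and the continuity equation to rewrite this coupling as a perfect time derivative plus terms controlled by $D(t)$. Assumption (\ref{newcond1}), in the same spirit as (A2) at the stationary level, is precisely what makes the quadratic form assembling all zeroth-order linear interactions — the relaxation terms $\phi_J/\tau$, $2\phi_{\mathcal E}/\tau$, $-3\phi_n/\tau$, the pressure term $\mu(\phi_n)_x$, and the electric coupling closed through Poisson — positive definite, so that $D(t)$ stays coercive. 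Each genuinely nonlinear term ($(J^2/n)_x$, $(J^3/n^2)_x$, $(J\mathcal{E}/n)_x$, $\phi_n(\phi_V)_x$, etc.) is at least quadratic in the perturbation with coefficients bounded in terms of the small stationary state, hence after integration by parts is bounded by $C\,E(t)^{1/2}D(t)$. Collecting everything gives $\tfrac{d}{dt}E(t)+c_1 D(t)\leqslant C\,E(t)^{1/2}D(t)$; choosing $\varsigma_0$ small enough that $C\varsigma_0^{1/2}\leqslant c_1/2$ yields $\tfrac{d}{dt}E+\tfrac{c_1}{2}D\leqslant0$, and since Poincar\'e and the elliptic bound give $E\leqslant C_2 D$, we obtain $\tfrac{d}{dt}E+\varsigma_1 E\leqslant0$ with $\varsigma_1=c_1/(2C_2)$, hence (\ref{1.9}) by Gronwall. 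Global existence follows by the standard continuation: setting $T^*:=\sup\{T>0:\sup_{[0,T]}E\leqslant\varsigma_0\}$, Theorem \ref{T1.3} gives $T^*>0$, and if $T^*<\infty$ the a priori estimate forces $E(t)\leqslant CE(0)e^{-\varsigma_1 t}\leqslant\varsigma_0/2$ on $[0,T^*)$ once $E(0)$ — hence $\varsigma_0$ — is taken small enough to absorb the constant $C$, contradicting the definition of $T^*$; thus the solution is global and (\ref{1.9}) holds for all $t>0$.

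I expect the main obstacle to be the top-order energy estimates for $\phi_J$ and $\phi_{\mathcal E}$ in the presence of the third-order quantum terms: these terms force one either to integrate by parts against $\phi_J,\phi_{\mathcal E}$ whose derivatives need not vanish on the boundary, or to move five spatial derivatives onto $\phi_n$ (which is only known to be in $H^3$ pointwise in time, though in $H^4$ after time integration), so the hierarchy of estimates must be organized carefully — recovering the highest-order norms of $\phi_J,\phi_{\mathcal E}$ from the equations themselves and using the continuity equation to trade $\phi_J$-derivatives for $\phi_n$-derivatives — so that every boundary term either vanishes by the $\partial_x^k\phi_n$ conditions or is absorbed. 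A secondary difficulty is the delicate weighting of the Lyapunov functional needed to keep the linear quadratic form coercive under (\ref{newcond1}) while the non-local electric coupling is closed through the Poisson equation.
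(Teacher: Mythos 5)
Your proposal is correct and follows essentially the same route as the paper: the same perturbation system, the same treatment of the Bohm term (integration by parts plus the substitution $(\phi_J)_x=-(\phi_n)_t+\nu(\phi_n)_{xx}$ yielding $\tfrac{\epsilon^2}{36}\tfrac{d}{dt}\|(\phi_n)_x\|^2+\tfrac{\epsilon^2\nu}{18}\|(\phi_n)_{xx}\|^2$), the same closure of the electric coupling through the Poisson equation into a time derivative of $\tfrac{\lambda^2}{2}\int n^*(\phi_V)_x^2$, the same use of (\ref{newcond1}) to make the zeroth-order quadratic form coercive, and the same Gronwall-plus-continuation conclusion. The only cosmetic difference is bookkeeping: the paper stops at two spatial derivatives of each equation and recovers $\|\phi_n\|_3$ from the $\epsilon^2\|(\phi_n)_{xxx}\|^2$ term generated by the quantum correction, and it bounds the nonlinearities by $C(\delta_0+\delta_{t^*}+J_b)$ times the dissipation rather than by $CE^{1/2}D$, but these are equivalent in substance.
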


The   paper is organized as follows.   In Section \ref{sec:ss}, we
prove the existence and uniqueness of the steady-state solution for
the system (\ref{1.2}) with boundary condition (\ref{1.4}). In
Section \ref{sec:ls} we establish the existence of a local-in-time
solution.  In Section \ref{sec:stab}  the global-in-time stability
of the steady-state solution obtained in Section \ref{sec:ss} is
investigated.

\section{Existence and uniqueness of the steady-state solution}\label{sec:ss}
This section is devoted to  proving  Theorem \ref{T1.1}, i.e., we
prove the existence and uniqueness of a steady-state solution
$(n^*,J^*,\mathcal{E}^*,V^*)$ of the BVP  (\ref{1.2}) -- (\ref{1.4})
that satisfies
\begin{eqnarray}\label{1.6}
 \hspace{-0.8in}  \left \{
 \begin{array}{ll}
   - \nu n^*_{xx} + J^*_x = 0,   \medskip\\
  \displaystyle
   - \nu J^*_{xx} + \frac{1}{\tau} J^*
   + \frac23 \left(\frac{(J^*)^2}{n^*}\right)_x
   + \frac23 \mathcal{E}^*_x
   - \frac{\epsilon^2}{9}n^* \left(\frac{(\sqrt{n^*})_{xx}}{\sqrt{n^*}}\right)_x
   - n^*V^*_x + \mu n^*_x
   = 0,  \medskip\\
  \displaystyle
   - \nu \mathcal{E}^*_{xx} + \frac{2}{\tau} \mathcal{E}^*
   - \frac{3}{\tau}n^* + \mu J^*_x
   + \frac53 \left(\frac{J^*}{n^*} \mathcal{E}^* \right)_x
   - \frac13
\left(\frac{(J^*)^3}{(n^*)^2}\right)_x  \medskip \\
  \displaystyle \qquad\qquad\qquad\qquad\qquad\quad\
   - \frac{\epsilon^2}{18}\left(\frac{J^* n^*_{xx}}{n^*} - \frac{J^* (n^*)_x^2}{(n^*)^2}\right)_x
   - J^* V^*_x
   =0,  \medskip \\
  \displaystyle
   -\lambda^2 V^*_{xx} = -n^* + \rho(x).
  \end{array}
 \right.
\end{eqnarray}

The idea is to construct the expected solution
$(n^*,J^*,\mathcal{E}^*)$ around the state $\displaystyle
\left(\rho(x), J_b, \frac32\rho(x)\right)$.  To this end,  set
\begin{eqnarray}\label{2.1}
p(x) = n^*(x) - \rho(x), \ \ q(x) = J^*(x) - J_b, \ \ r(x) =
\mathcal{E}^*(x) - \frac32 \rho(x).
\end{eqnarray}
Then, the BVP (\ref{1.6}) and (\ref{1.4}) can be reformulated as
\begin{eqnarray}\label{2.2}
 \hspace{-0.8in}\left \{
  \begin{array}{ll}
   - \nu p_{xx} + q_x = 0,   \medskip\\
  \displaystyle
   - \left(\nu+\frac{\epsilon^2}{18\nu}\right)q_{xx} + b_2q + c_2p + d_2p_x
   + e_2q_x + \frac23 r_x - \rho(x)V^*_{x}
   = a_2 + f_2,   \medskip \\
     \displaystyle
   - \nu r_{xx} + b_3r + c_3p + d_3p_x + e_3q_x + h_3q
   + \frac53 \frac{J_b}{\rho}r_x
   - J_b V^*_{x}  \medskip \\
  \displaystyle \qquad\qquad\qquad\qquad\qquad\qquad\quad\
   = a_3 + f_3
       + \frac{\epsilon^2}{18\nu} \frac{q_{xx}(q+J_b)}{p+\rho(x)},
          \medskip \\
   - \lambda^2 V^*_{xx} + p = 0, \medskip \\
  \partial_x^k p(0) = \partial_x^k p(1) = 0, \ k=0,1,2,3, \medskip \\
   q(0) = q(1) = r(0) = r(1) = V^*_{x}(0) = V^*_{x}(1) = 0, \ V^*(0) = 0, \
   V^*(1)=V_b,
  \end{array}
 \right. \qquad
\end{eqnarray}
where the function parameters $a_2$, $b_2$, $c_2$, $d_2$, $e_2$,
$f_2$, $a_3$, $b_3$, $c_3$, $d_3$, $e_3$ and $f_3$ are listed in
\ref{app:para} for simplicity of exposition.

The proof  of  Theorem \ref{T1.1} will be done in two steps.
\subsection{A-priori estimate}

The first step is to show that the system
(\ref{2.2}) possesses a solution $(p,q,r,V^*)$    belonging to the following space
\begin{eqnarray}\label{2.3}
\hspace{-0.5in} \mathfrak{X} := \big\{ (p,q,r,V^*)\in H^3\times H^2\times H^2\times
H^5\,  : \
        \|p\|_3^2+\|q\|_2^2+\|r\|_2^2 \leqslant C_0\delta_0^2\big\},
\end{eqnarray}
where $C_0$ is a positive constant to be specified later.  To this
end, we construct a priori estimate for any  $(p,q,r,V^*)$
satisfying the system (\ref{2.2}).

 First, it follows from $(\ref{2.2})_1$ and boundary
conditions that
\begin{eqnarray}\label{2.4}
q = \nu p_x,
\end{eqnarray}
which together with $(\ref{2.2})_4$ yield
\begin{eqnarray}\label{2.5}
 - \int_0^1 \rho(x)V^*_{x} q {\rm d}x
 = \frac{\nu}{\lambda^2} \int_0^1 \rho(x) p^2 {\rm d}x
   + \nu \int_0^1 \rho_x V^*_{x} p {\rm d}x.
\end{eqnarray}
Then, multiplying $(\ref{2.2})_1$ by $d_2 p$ and $(\ref{2.2})_2$ by
$q$, and integrating the resultant equalities over $[0,1]$, we
arrive at
$$
\nu \int_0^1 d_2p_x^2 {\rm d}x
 + \nu \int_0^1 d_{2x}pp_x {\rm d}x
 - \int_0^1 d_2 qp_x {\rm d}x
 - \int_0^1 d_{2x}pq {\rm d}x
 = 0,
$$
and
\begin{eqnarray*}
&& \hspace{-0.9in} (\nu+\frac{\epsilon^2}{18\nu}) \int_0^1 q_x^2 {\rm d}x
 + \int_0^1 b_2q^2 {\rm d}x
 + \int_0^1 c_2pq {\rm d}x
 + \int_0^1 d_2p_xq {\rm d}x
 + \int_0^1 e_2qq_x {\rm d}x   \\
&& \qquad
 + \frac23 \int_0^1 qr_x {\rm d}x
 - \int_0^1 \rho(x) V^*_x q {\rm d}x  \\
 &&\hspace{-1.1in} = (\nu+\frac{\epsilon^2}{18\nu}) \int_0^1 q_x^2 {\rm d}x
 + \int_0^1 b_2q^2 {\rm d}x
 + \int_0^1 c_2pq {\rm d}x
 + \int_0^1 d_2p_xq {\rm d}x
 - \frac12 \int_0^1 e_{2x}q^2 {\rm d}x  \\
&& \qquad
 + \frac23 \int_0^1 qr_x {\rm d}x
 + \frac{\nu}{\lambda^2} \int_0^1 \rho(x) p^2 {\rm d}x
 + \nu \int_0^1 \rho_x V^*_x p {\rm d}x  \\
 &&\hspace{-1.1in} = \int_0^1 (a_2 + f_2) q {\rm d}x,
\end{eqnarray*}
where we have used (\ref{2.5}).  Adding the above two equalities, we
obtain
 \begin{eqnarray}\label{2.6}
&& \hspace{-0.9in}\nu\mu \int_0^1 p_x^2 {\rm d}x
  + (\nu+\frac{\epsilon^2}{18\nu}) \int_0^1 q_x^2 {\rm d}x
  + \frac{1}{\tau} \int_0^1 q^2 {\rm d}x
  + \frac{\nu}{\lambda^2} \int_0^1 \rho(x)p^2 {\rm d}x
  + \frac23 \int_0^1 q r_x {\rm d}x  \nonumber \\
  && \hspace{-1.1in} =- \nu \int_0^1 d_{2x}p p_x {\rm d}x
    + \int_0^1 (d_{2x}-c_2)pq {\rm d}x
    + \frac12 \int_0^1 e_{2x} q^2 {\rm d}x
    - \nu \int_0^1 \rho_x V^*_x p {\rm d}x  \nonumber \\
&&   + \int_0^1 a_2 q {\rm d}x
    + \int_0^1 f_2 q {\rm d}x
    + \int_0^1 (\frac{1}{\tau}-b_2)q^2 {\rm d}x
    + \nu \int_0^1 (\mu-d_2)p_x^2 {\rm d}x.
\end{eqnarray}
Multiplying $(\ref{2.2})_4$ by $\nu\rho V^*$ and integrating the
resultant equality, we get
\begin{eqnarray}\label{2.7}
 \nu\lambda^2 \int_0^1 \rho(x) (V^*_x)^2 {\rm d}x
  + \nu\lambda^2 \int_0^1 \rho_x V^* V^*_x {\rm d}x
  + \nu \int_0^1 \rho(x) p V^* {\rm d}x
  = 0.
\end{eqnarray}

Let $\beta$ be a positive constant that satisfies
$$\displaystyle \left( \frac{(\sqrt{2}+1)\tau}{12}(\mu+\frac52)
- \frac{\nu}{2} \right)\beta <
\left(\nu+\frac{\epsilon^2}{18\nu}\right)(1-\alpha_0)$$ where
$\alpha_0$ is as specified in the Assumption $\mathbf{(A2)}$.  Then
testing $(\ref{2.2})_3$ by $\beta r$ and integrating the resultant
equality to obtain
\begin{eqnarray}\label{2.8}
&&  \hspace{-0.9in}\beta \nu \int_0^1 r_x^2 {\rm d}x
  + \frac{2}{\tau} \beta \int_0^1 r^2 {\rm d}x
  - \frac{3}{\tau}\beta \int_0^1 pr {\rm d}x
  + \beta \int_0^1 (\mu+\frac52)q_x r {\rm d}x \nonumber \\
   &&  \hspace{-1.1in} =- \beta \int_0^1 h_3 qr {\rm d}x
    - \frac56\beta
      \int_0^1 \frac{J_b\rho_x}{\rho^2(x)}r^2 {\rm d}x
    + \beta \int_0^1 J_b V^*_x r {\rm d}x
    - \beta \int_0^1 d_3 p_x r {\rm d}x
    + \beta \int_0^1 a_3 r {\rm d}x  \nonumber \\
&&\quad   + \beta \int_0^1 f_3 r {\rm d}x
    + \frac{\epsilon^2}{18\nu}\beta
      \int_0^1 \frac{q_{xx}(q+J_b)}{p+\rho(x)}r {\rm d}x
    + \beta \int_0^1 (\frac{2}{\tau}-b_3)r^2 {\rm d}x  \nonumber \\
&&\quad  - \beta \int_0^1 (\frac{3}{\tau}+c_3)pr {\rm d}x
    - \beta \int_0^1 (e_3-\mu-\frac52)q_x r {\rm d}x.
\end{eqnarray}
Now combining (\ref{2.6})--(\ref{2.8}),  one can show that for  any
arbitrary positive constant $C$,
\begin{eqnarray}\label{2.9}
&&  \hspace{-0.9in} \frac{\nu}{\lambda^2}C \int_0^1 \rho(x)p^2 {\rm d}x
  + \frac{C}{\tau} \int_0^1 q^2 {\rm d}x
  + \frac{2\beta}{\tau} \int_0^1 r^2 {\rm d}x
  + \nu\mu C \int_0^1 p_x^2 {\rm d}x
  + (\nu+\frac{\epsilon^2}{18\nu})C \int_0^1 q_x^2 {\rm d}x
    \nonumber \\
&&\hspace{-0.8in}+ \beta\nu \int_0^1 r_x^2 {\rm d}x
  + \nu\lambda^2 C \int_0^1 \rho(x)(V^*_x)^2 {\rm d}x
  + \frac23 C \int_0^1 qr_x {\rm d}x
  - \frac{3}{\tau}\beta \int_0^1 pr {\rm d}x  \nonumber \\
&& \hspace{-0.8in}+ \beta \int_0^1 (\mu+\frac52)q_x r {\rm d}x
  + \nu C \int_0^1 \rho(x)p V^* {\rm d}x  \nonumber \\
  && \hspace{-1.1in} = - \nu C \int_0^1 d_{2x}p p_x {\rm d}x
    + C \int_0^1 (d_{2x}-c_2)pq {\rm d}x
    + \frac{C}{2} \int_0^1 e_{2x} q^2 {\rm d}x
    - \nu C \int_0^1 \rho(x)V^*_x p {\rm d}x \nonumber \\
&& \hspace{-0.8in}  + C \int_0^1 a_2 q {\rm d}x
    + C \int_0^1 f_2 q {\rm d}x
    + C \int_0^1 (\frac{1}{\tau}-b_2)q^2 {\rm d}x
    + \nu C \int_0^1 (\mu-d_2)p_x^2 {\rm d}x  \nonumber \\
&& \hspace{-0.8in}  - \nu \lambda^2 C \int_0^1 \rho_x V^* V^*_x {\rm d}x
    - \beta \int_0^1 h_3 qr {\rm d}x
    - \frac56\beta
      \int_0^1 \frac{J_b\rho_x}{\rho^2(x)}r^2 {\rm d}x
    + \beta \int_0^1 J_b V^*_x r {\rm d}x  \nonumber \\
&&   \hspace{-0.8in} - \beta \int_0^1 d_3 p_x r {\rm d}x
    + \beta \int_0^1 a_3 r {\rm d}x
    + \beta \int_0^1 f_3 r {\rm d}x
    + \frac{\epsilon^2}{18\nu}\beta
      \int_0^1 \frac{q_{xx}(q+J_b)}{p+\rho(x)}r {\rm d}x \nonumber \\
&&   \hspace{-0.8in} + \beta \int_0^1 (\frac{2}{\tau}-b_3)r^2 {\rm d}x
    - \beta \int_0^1 (\frac{3}{\tau}+c_3)pr {\rm d}x
    - \beta \int_0^1 (e_3-\mu-\frac52)q_x r {\rm d}x.
\end{eqnarray}
  Observe that
\begin{eqnarray*}
\hspace{-0.8in} \|V^*\|^2 = \int_0^1 \big| \int_0^x V^*_{y} {\rm d}y
\big|^2 {\rm d}x
  \leqslant \|V^*_x\|^2, \quad
  \left( (\nu \rho(x))^2
     - 4\cdot\frac{\nu\rho(x)}{2\lambda^2}
        \cdot\frac{2}{3}\nu\lambda^2\rho(x) \right)C^2 <0 .
   \end{eqnarray*}
    Thus
\begin{eqnarray}\label{2.10}
&& \frac{\nu}{2\lambda^2}C \int_0^1 \rho(x)p^2 {\rm d}x
  + \frac{2\nu\lambda^2}{3} C \int_0^1 \rho(x) (V^*_x)^2 {\rm d}x
  + \nu C \int_0^1 \rho(x)p V^* {\rm d}x \nonumber\\
 & \geqslant &
   \frac{\nu}{2\lambda^2}C \int_0^1 \rho(x)p^2 {\rm d}x
    + \frac{2\nu\lambda^2}{3} C \int_0^1 \rho(x)(V^*)^2 {\rm d}x
    + \nu C \int_0^1 \rho(x) p V^* {\rm d}x \nonumber\\
  &  > & 0.
\end{eqnarray}
In particular, under the assumption $\mathbf{(A2)}$, by taking
$\displaystyle C = \frac{3\sqrt{2}}{2}(\mu+\frac52)\beta$ in
(\ref{2.9}), it is straightforward to show that there exist positive
constants $l_i, i= 1,2,3$, such that
\begin{eqnarray}\label{2.11}
&&\hspace{-0.9in} l_1 \|p\|_1^2 + l_2 \|q\|_1^2 + l_3 \|r\|_1^2 \nonumber \\
  && \hspace{-1.1in}\leqslant
   \frac{C\nu}{2\lambda^2} \int_0^1 \rho(x)p^2 {\rm d}x
    + \frac{C}{\tau} \int_0^1 q^2 {\rm d}x
    + \frac{2\beta}{\tau} \int_0^1 r^2 {\rm d}x
    + \nu\mu C \int_0^1 p_x^2 {\rm d}x
    + (\nu+\frac{\epsilon^2}{18\nu})C \int_0^1 q_x^2 {\rm d}x
    \nonumber \\
&& \hspace{-0.9in}  + \beta \nu \int_0^1 r_x^2 {\rm d}x
    + \frac23 C \int_0^1 qr_x {\rm d}x
    - \frac{3}{\tau}\beta \int_0^1 pr {\rm d}x
    + \beta \int_0^1 (\mu+\frac52)q_xr {\rm d}x.
\end{eqnarray}
Inserting (\ref{2.10}) and (\ref{2.11}) into (\ref{2.9}), this
yields
\begin{eqnarray*}
&&\hspace{-0.9in} l_1\|p\|_1^2 + l_2\|q\|_1^2 + l_3\|r_1\|_1^2
  + \frac{\nu\lambda^2}{3}C \int_0^1 \rho(x)(V^*_x)^2 {\rm d}x \\
&& \hspace{-1.1in} \leqslant
  - \nu C \int_0^1 d_{2x}pp_x {\rm d}x
  + C \int_0^1 (d_{2x}-c_2)pq {\rm d}x
    + \frac{C}{2} \int_0^1 e_{2x} q^2 {\rm d}x
    - \nu C \int_0^1 \rho(x)V^*_x p {\rm d}x  \\
&&\hspace{-0.9in}   + C \int_0^1 a_2 q {\rm d}x
    + C \int_0^1 f_2 q {\rm d}x
    + C \int_0^1 (\frac{1}{\tau}-b_2)q^2 {\rm d}x
    + \nu C \int_0^1 (\mu-d_2)p_x^2 {\rm d}x   \\
&&\hspace{-0.9in}   - \nu \lambda^2 C \int_0^1 \rho_x V^* V^*_x {\rm d}x
    - \beta \int_0^1 h_3 qr {\rm d}x
    - \frac56\beta
      \int_0^1 \frac{J_b\rho_x}{\rho^2(x)}r^2 {\rm d}x
    + \beta \int_0^1 J_b V^*_x r {\rm d}x   \\
&& \hspace{-0.9in}  - \beta \int_0^1 d_3 p_x r {\rm d}x
    + \beta \int_0^1 a_3 r {\rm d}x
    + \beta \int_0^1 f_3 r {\rm d}x
    + \frac{\epsilon^2}{18\nu}\beta
      \int_0^1 \frac{q_{xx}(q+J_b)}{p+\rho(x)}r {\rm d}x  \\
&&  \hspace{-0.9in} + \beta \int_0^1 (\frac{2}{\tau}-b_3)r^2 {\rm d}x
    - \beta \int_0^1 (\frac{3}{\tau}+c_3)pr {\rm d}x
    - \beta \int_0^1 (e_3-\mu-\frac52)q_x r {\rm d}x   \\
 &&  \hspace{-1.1in} \leqslant
  C_1\delta_0 (\|p\|_1^2+\|q\|_1^2+\|r\|_1^2+\|V^*\|_1^2)
   + \hat{l}_1\|q\|^2 + C_{\hat{l}_1}\|a_2\|^2
   + \hat{l}_2\|r\|^2 + C_{\hat{l}_2}\|a_3\|^2   \\
&& \hspace{-0.9in} + \frac23 \nu C J_b^2 \int_0^1 \rho^{-2}(x)p_x^2 {\rm d}x
   - 2\beta J_b^2 \int_0^1 \rho^{-3}(x) qr {\rm d}x
   + \beta J_b \int_0^1 V^*_x r {\rm d}x   \\
&& \hspace{-0.9in} + \frac{5\beta J_b}{2} \int_0^1 \rho^{-1}(x)p_xr {\rm d}x
   - \frac{\epsilon^2}{18\nu}\beta J_b
     \int_0^1 \frac{q_x r}{p+\rho(x)} {\rm d}x
   + \beta J_b^2 \int_0^1 \rho^{-2}(x)q_x r {\rm d}x,
\end{eqnarray*}
where $\displaystyle \hat{l}_1=\frac{l_2}{2},
\hat{l}_2=\frac{l_3}{2}, C_{\hat{l}_1}$ and $C_{\hat{l}_2}$ are
positive constants. Therefore, provided that $J_b$ and $\delta_0$
are sufficiently small, there exist positive constants $\tilde{l}_i,
i=1,2,3,4$ such that
\begin{eqnarray}\label{2.12}
 \tilde{l}_1 \|p\|_1^2 + \tilde{l}_2\|q\|_1^2
  + \tilde{l}_3 \|r\|_1^2 + \tilde{l}_4 \|V^*\|_1^2
 \leqslant
  C_2 \delta_0^2.
\end{eqnarray}

Following above similar analysis, we can obtain the high order
estimate
\begin{eqnarray}\label{2.13}
 \|p\|_2^2 + \|q\|_2^2 + \|r\|_2^2 + \|V^*\|_2^2
 \leqslant
  C_3\delta_0^2.
\end{eqnarray}
Moreover, thanks to $(\ref{2.2})_4$, (\ref{2.4}) and (\ref{2.13}),
we conclude that there exists a positive constant $C_0$ such that
\begin{eqnarray}\label{2.14}
 \|p\|_3^2 + \|q\|_2^2 + \|r\|_2^2 + \|V^*\|_5^2
 \leqslant
  C_0\delta_0^2.
\end{eqnarray}

\subsection{Existence and uniqueness of  solutions to (\ref{2.2})}

In this subsection we   establish  the existence of solutions for
BVP (\ref{2.2}).   For simplicity of notations, define the left-hand
sides of (\ref{2.2}) as $\mathfrak{L}(w)$ where $w=(p,q,r,V^*)$.
Clearly  $\mathfrak{L}$ is a linear operator from $\mathcal{H}:=
\big\{ w\ \big| \ w\in H^2\times H^2 \times H^2 \times H^2 \big\}$
to $\mathcal{L}:= \big\{ u \ \big| \ u \in L^2 \times L^2 \times L^2
\times L^2 \big\}$. It can be shown that for any
$F=(F_1,F_2,F_3,F_4)\in \mathcal{L}$, there exists  a corresponding
unique solution $w=(p,q,r,V^*)\in\mathcal{H}$ to the equation
$\mathfrak{L}(w)=F$.

Define another linear operator $\mathfrak{L}_{\theta}$ by
\begin{eqnarray}\label{2.15}
 \mathfrak{L}_{\theta}(w)
  = \theta \mathfrak{L}(w) + (1-\theta)\mathfrak{L}_0(w),
\end{eqnarray}
where $\mathfrak{L}_0(w)= -w_{xx} + w$ and $\theta \in I := [0,1]$.   Let
$\tilde{I}$ be the collection of all  $\theta \in I$ such that for any
$F(x)\in\mathcal{L}$ there exists a unique solution $w \in
\mathcal{H}$  for the equation $\mathfrak{L}_{\theta}(w) = F(x)$.
 Then $\tilde{I}$ is nonempty, because $0\in\tilde{I}$.
We next prove  that $1\in\tilde{I}$.

Suppose that $\theta_0 \in \tilde{I}$.   Then for any
$u\in\mathcal{H}, F\in \mathcal{L}$, there is a unique corresponding
function $w\in\mathcal{H}$ such that
\begin{eqnarray}\label{2.17}
 \mathfrak{L}_{\theta_0}(w)
  = F + (\theta_0-\theta)\mathfrak{L}(u) - (\theta_0-\theta)\mathfrak{L}_0(u).
\end{eqnarray}
Define the mapping  $\T_{\theta_0, \theta}: \mathcal{H} \to \mathcal{H}$ by
\begin{eqnarray*}
 \T_{\theta_0, \theta}(u) = w, \qquad u\in\mathcal{H}.
\end{eqnarray*}
Then for any  $u_1, u_2 \in \mathcal{H}$, there exist
two corresponding functions $w_1, w_2 \in \mathcal{H}$ satisfying
$$
 \mathfrak{L}_{\theta_0}(w_i)
  = F + (\theta_0-\theta)\mathfrak{L}(u_i) - (\theta_0-\theta)\mathfrak{L}_0(u_i) \
  \mbox{and} \ \T_{\theta_0, \theta}(u_i) = w_i, \ i=1,2.
$$
Since the operators $\mathfrak{L}_{\theta_0}, \mathfrak{L}_0$ and
$\mathfrak{L}$ are linear, we have
$$
 \mathfrak{L}_{\theta_0}(w_2-w_1)
  = (\theta_0-\theta) \big(\mathfrak{L}(u_2-u_1) - \mathfrak{L}_0(u_2-u_1)\big).
$$
Moreover, following the same proof process to reach (\ref{2.14}), we
can deduce that there exists a positive constant $C_M$ such that
\begin{eqnarray*}
 \|w_2-w_1\|_{\mathcal{H}}^2
  \leqslant
   C_M |\theta_0-\theta| \|u_2-u_1\|_{\mathcal{H}}^2,
\end{eqnarray*}
which implies that the operator $\T_{\theta_0, \theta}$ is a
contractive map on $\mathcal{H}$ if $C_M |\theta_0-\theta|<1$.

Without loss of generality, suppose that $w^*$ is a fixed point for
$\T_{\theta_0, \theta}$.   It then follows from (\ref{2.15}) and
(\ref{2.17}) that
 $\mathfrak{L}_{\theta}(w^*) = F$ and moreover  $\theta \in \tilde{I}$.
 Therefore,   $\tilde{I}$ can be extended to the whole interval $[0,1]$
by the method of continuity.  In particular, $1\in\tilde{I}$.  Therefore
  the  equation
$\mathfrak{L}(w)=F$   has a  unique solution $w\in\mathcal{H}$  for any $F\in \mathcal{L}$.

We are now ready to show the existence of solutions to the BVP
(\ref{2.2}).  In fact, due to  the arguments above,   for any
$$u=(u_1,u_2,u_3,u_4)\in\mathcal{H},\quad \displaystyle F= \left(0, a_2+f_2,
a_3+f_3
 + \frac{\epsilon^2}{18\nu}\frac{u_{2xx}(u_2+J_b)}{u_1+\rho(x)},0 \right),$$
there exists a unique solution $w=(p,q,r,V^*)\in\mathcal{H}$ for the
equation $\mathfrak{L}(w) = F$.

Let $\tilde{\T}(u)=w$, we claim that $\tilde{\T}$ is a contractive
map. Indeed, for any $u^{(i)}=(u_1^{(i)}, u_2^{(i)}, u_3^{(i)},
u_4^{(i)})$, there corresponds to a function $w_i$ such that
$\tilde{\T}(u^{(i)}) = w_i, i=1,2$ and
$$
 \mathfrak{L}(w_i) = \left(0, a_2+f_2(u^{(i)}),
           a_3+f_3(u^{(i)})+\frac{\epsilon^2}{18\nu}
           \frac{u_{2xx}^{(i)}(u_2^{(i)}+J_b)}{u_1^{(i)}+\rho(x)}, 0 \right),
           \ i=1,2.
$$
Thus
$$\mathfrak{L}(w_1-w_2)
 =  \left(0, f_2(u^{(1)})-f_2(u^{(2)}),
     f_3(u^{(1)})+\frac{\epsilon^2}{18\nu} \tilde{f}_3 (u^{(1)},u^{(2)})
     -f_3(u^{(2)}),
     0\right),
$$
where $\tilde{f}_3$ is given in \ref{app:para}.

From the explicit formula of $f_i, i=2,3$, it is straightforward to
check that
\begin{eqnarray*}
 |f_i(u^{(1)})-f_i(u^{(2)})|
  \leqslant
   C_4 \delta_0 (|u^{(1)}-u^{(2)}| + |u_x^{(1)}-u_x^{(2)}|).
\end{eqnarray*}
Furthermore, we can obtain
\begin{eqnarray*}
 \|w_1-w_2\|_{\mathcal{H}}^2
  \leqslant
   C_5 \delta_0 \|u^{(1)}-u^{(2)}\|_{\mathcal{H}}^2.
\end{eqnarray*}
Consequently, the operator $\tilde{\T}$ is a contractive map from
$\mathcal{H} \mapsto \mathcal{H}$ as long as $C_5\delta_0 < 1$,
which means there exists a unique fixed point
$w=(p,q,r,V^*)\in\mathcal{H}$ satisfying
\begin{eqnarray*}
\mathfrak{L}(w)
  = \left(0, a_2+f_2(w),
     a_3+f_3(w)+\frac{\epsilon^2}{18\nu}\frac{q_{xx}(q+J_b)}{p+\rho(x)},
     0 \right).
\end{eqnarray*}
Moreover, it follows from $(\ref{2.2})_1$ and $(\ref{2.2})_4$ that
$w\in\mathfrak{X}$, which is the unique solution of BVP (\ref{2.2}).
Then, we can conclude from (\ref{2.1}) that there exists a unique
solution $(n^*(x), J^*(x), \mathcal{E}^*(x), V^*(x))$ to the BVP
(\ref{1.6}) and (\ref{1.4}), provided $\delta_0$ and $J_b$ are
sufficiently small, that is
\begin{eqnarray*}
 n^*(x) = p(x)+\rho(x), \ J^*(x) = q(x)+J_b, \
 \mathcal{E}^*(x) = r(x)+\frac32\rho(x),
\end{eqnarray*}
which satisfies (\ref{1.8}). Hence the proof of Theorem \ref{T1.1}
is complete.

\section{Local existence of solutions} \label{sec:ls}

The purpose of this section is to prove Theorem \ref{T1.3}.   To that end, introduce the change of variables
\begin{equation}\label{3.1}
\tn=n-n^*, \ \tJ=J-J^*, \ \tE=\mathcal{E}-\mathcal{E}^*, \ \tV=V-V^*,
\end{equation}
where $(n^*(x), J^*(x), \mathcal{E}^*(x), V^*(x))$ is the
steady-state solution of BVP (\ref{1.6}) and (\ref{1.4}) shown to exist in
Section \ref{sec:ss}.
Then the evolution equations for $(\tn, \tJ, \tE, \tV)$ read
\begin{eqnarray}\label{3.2}
\left\{
 \begin{array}{ll}
 \tn_t - \nu\tn_{xx} + \tJ_x = 0, \medskip \\
 \displaystyle
 \tJ_t -\nu \tJ_{xx} + \frac{1}{\tau}\tJ
  - \frac{\epsilon^2}{18}\tn_{xxx}
  + \frac23 \tE_x
  - n_0 \tV_x
  + \mu\tn_x
 = g_1, \medskip \\
 \displaystyle
 \tE_t - \nu \tE_{xx}
  + \frac{2}{\tau}\tE
  - \frac{3}{\tau}\tn
  + (\mu + \frac52) \tJ_x
 = g_2, \medskip \\
 - \lambda^2 \tV_{xx} = - \tn,
 \end{array}
\right.
\end{eqnarray}
where $g_1$ and $g_2$ are listed in \ref{app:para},  along with the
following natural initial and boundary data
\begin{equation}\label{3.3}
 \hspace{-0.5in}\left\{
  \begin{array}{ll}
   \tn(x,0)=\tn_0(x), \ \tJ(x,0)=\tJ_0(x), \ \tE(x,0)=\tE_0(x),\
    \tV(x,0)=\tV_0(x), \medskip \\
   \tV(0,t)=\tV(1,t)=\partial_x \tV(0,t)=\partial_x \tV(1,t) =0,  \medskip \\
   \partial_x^k \tn(0,t) = \partial_x^k \tn(1,t) = 0, \ k=0,1,2,3, \medskip \\
   \partial_x^l \tJ(0,t) = \partial_x^l \tJ(1,t)
   = \partial_x^l \tE(0,t) = \partial_x^l \tE(1,t)=0, \ l=0,1,2, \ t \in [0,1].
  \end{array}
 \right.
\end{equation}

It suffices to establish the local existence of solutions of IBVP
(\ref{3.2})-(\ref{3.3}).   To this end, choose a sequence of numbers
$\{\sigma\}$ with $0<\sigma<1$ and consider the following family of
parabolic initial and boundary value problems for $(x,t)\in
[0,1]\times \mathbb{R}$:
\begin{eqnarray}\label{4.1}
\hspace{-0.5in} \left\{
  \begin{array}{ll}
   \partial_t \tn_{\sigma} + \tJ_{\sigma x}
   = \nu \tn_{\sigma xx} - \sigma\tn_{\sigma xxxx}, \medskip \\
   \displaystyle \partial_t \tJ_{\sigma} - \frac{\epsilon^2}{18} \tn_{\sigma xxx}
     + \frac23 \tE_{\sigma x} - {n^*}\tV_{\sigma x} + \mu\tn_{\sigma x}
     - g_{\sigma 1}
   = \nu \tJ_{\sigma xx} - \sigma \tJ_{\sigma xxxx}
     - \frac{1}{\tau} \tJ_{\sigma}, \medskip \\
   \displaystyle \partial_t \tE_{\sigma} - \frac{3}{\tau}\tn_{\sigma}
     + (\mu+\frac52) \tJ_{\sigma x} - g_{\sigma 2}
   = \nu \tE_{\sigma xx} - \sigma \tE_{\sigma xxxx}
     - \frac{2}{\tau} \tE_{\sigma}, \medskip \\
   - \lambda^2 \tV_{\sigma xx} = - \tn_{\sigma}, \medskip \\
   \tn_{\sigma}(x,0) = \tn_{\sigma}^0, \
   \tJ_{\sigma}(x,0) = \tJ_{\sigma}^0, \
   \tE_{\sigma}(x,0) = \tE_{\sigma}^0,  \medskip \\
   \partial_x^k \tn_{\sigma}(0,t)
    = \partial_x^k\tn_{\sigma}(1,t) = 0, \ k=0,1,2,3,  \medskip \\
   \partial_x^l \tJ_{\sigma}(0,t) = \partial_x^l \tJ_{\sigma}(1,t)
    = \partial_x^l \tE_{\sigma}(0,t) = \partial_x^l \tE_{\sigma}(1,t), \
    l=0,1,2,  \medskip \\
  \tV_{\sigma}(0,t) = \tV_{\sigma}(1,t)
   =\partial_x \tV(0,t)=\partial_x \tV(1,t) = 0,
  \end{array}
 \right.
\end{eqnarray}
where $g_{\sigma 1}$ and $g_{\sigma 2}$ are listed in \ref{app:para}, and
  $(\tn_{\sigma}^0, \tJ_{\sigma}^0, \tE_{\sigma}^0) \in
H^3\times H^2 \times H^2$ is assumed to satisfy compatibility conditions on the
boundary, and moreover
$$
 \tn_{\sigma}^0 \rightarrow \tn_0 \ \ \mbox{in} \ \ H^3(0,1), \quad
 \tJ_{\sigma}^0 \rightarrow \tJ_0 \ \ \mbox{in} \ \ H^2(0,1), \quad
 \tE_{\sigma}^0 \rightarrow \tE_0 \ \ \mbox{in} \ \ H^2(0,1), \quad \mbox{as}\quad \sigma \to 0.
$$

Notice that the system (\ref{4.1}) is a fourth order nonlinear parabolic system
with third order terms.   It is standard to show that this problem has
a unique solution $(\tn_{\sigma}, \tJ_{\sigma}, \tE_{\sigma},
\tV_{\sigma})(x,t)$ in $[0,1]\times[0,t_{\sigma}]$ for some
$t_{\sigma} > 0$ (see \cite{T88,CD07}), which belongs to
\begin{eqnarray}\label{4.2}
 \left\{
 \begin{array}{ll}
 \tn_{\sigma} \in L^2(0,t_{\sigma}; H^5(0,1))
    \cap \C([0,t_{\sigma}]; H^3(0,1)), \medskip \\
 \tJ_{\sigma} \in L^2(0,t_{\sigma}; H^4(0,1))
    \cap \C([0,t_{\sigma}]; H^2(0,1)), \medskip \\
 \tE_{\sigma} \in L^2(0,t_{\sigma}; H^4(0,1))
    \cap \C([0,t_{\sigma}]; H^2(0,1)), \medskip \\
 \tV_{\sigma} \in L^2(0,t_{\sigma}; H^7(0,1))
    \cap \C([0,t_{\sigma}]; H^5(0,1)),
 \end{array}
 \right.
\end{eqnarray}
The proof of Theorem \ref{T1.3} will be done in
four steps.

\subsection{Uniform a-priori estimates.}

To begin with, based on (\ref{4.2}), we can choose a number
$\gamma_0$ such that
\begin{eqnarray*}
 0 < \gamma_0 < \inf\limits_{x\in[0,1]}\tn_0(x),
 \quad\
 \|\tn_0(x)\|_3^2
  + \|\tJ_0(x)\|_2^2
  + \|\tE_0(x)\|_2^2
  < \gamma_0^{-1}
\end{eqnarray*}
and
\begin{eqnarray}\label{4.3}
 \left\{
 \begin{array}{ll}
\gamma_0 \leqslant \inf\limits_{x\in[0,1]}\tn_{\sigma}(x,t),
\medskip \\
  \|\tn_{\sigma}(\cdot, t)\|_3^2
  + \|\tJ_{\sigma}(\cdot, t)\|_2^2
  + \|\tE_{\sigma}(\cdot, t)\|_2^2
  \leqslant \gamma_0^{-1}, \ \ \mbox{for} \  t \in [0,
  t_{\sigma}], \medskip \\
  \max\limits_{t \in [0,t_{\sigma}]}
  \big( \|\tn_{\sigma}(\cdot, t)\|_3^2
  + \|\tJ_{\sigma}(\cdot, t)\|_2^2
  + \|\tE_{\sigma}(\cdot, t)\|_2^2 \big)
  = \gamma_0^{-1}.
 \end{array}
 \right.
\end{eqnarray}
Otherwise, we compress the interval $[0, t_{\sigma}]$.
 In addition, throughout this section it is
assumed that there exist positive constants $\beta_1$ and $\gamma_1$
such that
\begin{eqnarray*}
 \gamma_1 \leqslant {n^*}+\tn_{\sigma}(x,t)
    \leqslant \gamma_1^{-1}, \quad
   \|J^*+\tJ_{\sigma}\|_{L^{\infty}} \leqslant \beta_1.
\end{eqnarray*}
In the following, we concentrate on establishing uniform estimates
of $(\tn_{\sigma},\tJ_{\sigma},\tE_{\sigma},\tV_{\sigma})$ with
respect to $\sigma$, which will imply that the life span
$t_{\sigma}$ can not tend to zero for $\sigma$ going to zero.

 Multiplying $(\ref{4.1})_1,
(\ref{4.1})_2$ and $(\ref{4.1})_3$ by $\mu\tn_{\sigma},
\tJ_{\sigma}$ and $\displaystyle \frac{2}{3\mu} \tE_{\sigma}$,
respectively, then integrating them over [0,1] to obtain
\begin{eqnarray*}
&& \hspace{-0.9in}\frac{\mu}{2} \frac{\rm d}{{\rm d}t} \|\tn_{\sigma}\|^2
  + \mu \int_0^1 \tJ_{\sigma x} \tn_{\sigma} {\rm d}x \\
& &\hspace{-1.1in} =\mu\nu \int_0^1 \tn_{\sigma xx}\tn_{\sigma} {\rm d}x
    - \mu\sigma \int_0^1 \tn_{\sigma xxxx}\tn_{\sigma} {\rm d}x
 = - \mu\nu \|\tn_{\sigma x}\|^2
    - \mu\sigma \|\tn_{\sigma xx}\|^2, \\
&& \hspace{-0.9in}\frac12\frac{\rm d}{{\rm d}t} \|\tJ_{\sigma}\|^2
  - \frac{\epsilon^2}{18} \int_0^1 \tn_{\sigma xxx}\tJ_{\sigma} {\rm d}x
  + \frac23 \int_0^1 \tE_{\sigma x} \tJ_{\sigma} {\rm d}x \\
 && \hspace{-0.7in} - \int_0^1 {n^*}\tV_{\sigma x} \tJ_{\sigma} {\rm d}x
  + \mu \int_0^1 \tn_{\sigma x} \tJ_{\sigma} {\rm d}x
  - \int_0^1 g_{\sigma 1} \tJ_{\sigma} {\rm d}x   \\
  && \hspace{-1.1in}= \nu \int_0^1 \tJ_{\sigma xx} \tJ_{\sigma} {\rm d}x
    - \sigma \int_0^1 \tJ_{\sigma xxxx} \tJ_{\sigma} {\rm d}x
    - \frac{1}{\tau} \int_0^1 \tJ_{\sigma}^2 {\rm d}x \\
   && \hspace{-1.1in}= - \nu \|\tJ_{\sigma x}\|^2
    - \sigma \|\tJ_{\sigma xx}\|^2
    - \frac{1}{\tau}\|\tJ_{\sigma}\|^2,
\end{eqnarray*}
and
\begin{eqnarray*}
&&\hspace{-0.9in} \frac{1}{3\mu} \frac{\rm d}{{\rm d}t} \|\tE_{\sigma}\|^2
  - \frac{2}{\mu\tau} \int_0^1 \tn_{\sigma} \tE_{\sigma} {\rm d}x
  + \frac23 \int_0^1 \tJ_{\sigma x} \tE_{\sigma} {\rm d}x
  - \frac{2}{3\mu} \int_0^1 (g_{\sigma 2} - \frac52 \tJ_{\sigma x})\tE_{\sigma} {\rm d}x
     \\
  && \hspace{-1.1in}=\frac{2\nu}{3\mu} \int_0^1 \tE_{\sigma xx}\tE_{\sigma} {\rm d}x
    - \frac{2\sigma}{3\mu} \int_0^1 \tE_{\sigma xxxx}\tE_{\sigma} {\rm d}x
    - \frac{4}{3\mu\tau} \int_0^1 \tE_{\sigma}^2 {\rm d}x   \\
  && \hspace{-1.1in}= - \frac{2\nu}{3\mu} \|\tE_{\sigma x}\|^2
    - \frac{2\sigma}{3\mu} \|\tE_{\sigma xx}\|^2
    - \frac{4}{3\mu\tau} \|\tE_{\sigma}\|^2.
\end{eqnarray*}
Adding the above three equalities results in
\begin{eqnarray}\label{4.4}
&&\hspace{-0.5in} \frac12 \frac{\rm d}{{\rm d}t}
  \big( \mu \|\tn_{\sigma}\|^2
        + \|\tJ_{\sigma}\|^2
        + \frac{2}{3\mu} \|\tE_{\sigma}\|^2 \big)
  + \mu\nu \|\tn_{\sigma x}\|^2
  + \mu\sigma \|\tn_{\sigma xx}\|^2 \nonumber \\
&&\hspace{-0.4in} + \frac{1}{\tau} \|\tJ_{\sigma}\|^2
  + \nu \|\tJ_{\sigma x}\|^2
  + \sigma \|\tJ_{\sigma xx}\|^2
  + \frac{4}{3\mu\tau} \|\tE_{\sigma}\|^2
  + \frac{2\nu}{3\mu} \|\tE_{\sigma x}\|^2
  + \frac{2\sigma}{3\mu} \|\tE_{\sigma xx}\|^2 \nonumber \\
  &&\hspace{-0.7in}= \frac{\epsilon^2}{18} \int_0^1 \tn_{\sigma xxx}\tJ_{\sigma} {\rm d}x
    + \int_0^1 {n^*} \tV_{\sigma x} \tJ_{\sigma} {\rm d}x
    + \frac{2}{\mu\tau} \int_0^1 \tn_{\sigma} \tE_{\sigma} {\rm d}x
    + \int_0^1 g_{\sigma 1} \tJ_{\sigma} {\rm d}x \nonumber \\
&&\hspace{-0.4in}   + \frac{2}{3\mu}
      \int_0^1 (g_{\sigma 2}-\frac52 \tJ_{\sigma x})\tE_{\sigma} {\rm d}x.
\end{eqnarray}
Using partial integration and  $(\ref{4.1})_{1,4}$, we
find that
\begin{eqnarray*}
&& \frac{\epsilon^2}{18} \int_0^1 \tn_{\sigma xxx} \tJ_{\sigma} {\rm d}x
  =  - \frac{\epsilon^2}{18} \int_0^1 \tn_{\sigma xx}\tJ_{\sigma
x} {\rm d}x \\
  &= &- \frac{\epsilon^2}{18} \int_0^1 \tn_{\sigma xx}
       (\nu\tn_{\sigma xx}-\sigma\tn_{\sigma xxxx}-\tn_{\sigma t})
       {\rm d}x \nonumber \\
  &= & - \frac{\nu\epsilon^2}{18} \|\tn_{\sigma xx}\|^2
    - \frac{\epsilon^2}{18}\sigma \|\tn_{\sigma xxx}\|^2
    - \frac{\epsilon^2}{36} \frac{\rm d}{{\rm d}t}
      \|\tn_{\sigma x}\|^2,
\end{eqnarray*}
which can be inserted into (\ref{4.4}) to obtain
\begin{eqnarray}\label{4.5}
&&\hspace{-0.9in} \frac12 \frac{\rm d}{{\rm d}t}
  \big( \mu\|\tn_{\sigma}\|^2
        + \frac{\epsilon^2}{18}\|\tn_{\sigma x}\|^2
        + \|\tJ_{\sigma}\|^2
        + \frac{2}{3\mu}\|\tE_{\sigma}\|^2 \big)
  + \mu\nu \|\tn_{\sigma x}\|^2
  + (\frac{\nu\epsilon^2}{18}+\mu\sigma) \|\tn_{\sigma xx}\|^2 \nonumber \\
&& \hspace{-0.6in}+ \frac{\epsilon^2}{18}\sigma \|\tn_{\sigma xxx}\|^2
  + \frac{1}{\tau} \|\tJ_{\sigma}\|^2
  + \nu \|\tJ_{\sigma x}\|^2
  + \sigma \|\tJ_{\sigma xx}\|^2 \nonumber \\
&& \hspace{-0.6in}
  + \frac{4}{3\mu\tau} \|\tE_{\sigma}\|^2
  + \frac{2\nu}{3\mu} \|\tE_{\sigma x}\|^2
  + \frac{2\sigma}{3\mu} \|\tE_{\sigma xx}\|^2 \nonumber \\
  &&\hspace{-1.1in} = \int_0^1 {n^*} \tV_{\sigma x}\tJ_{\sigma} {\rm d}x
    + \frac{2}{\mu\tau} \int_0^1 \tn_{\sigma} \tE_{\sigma} {\rm d}x
    + \int_0^1 g_{\sigma 1} \tJ_{\sigma} {\rm d}x
    + \frac{2}{3\mu} \int_0^1 (g_{\sigma 2}-\frac52 \tJ_{\sigma x})\tE_{\sigma} {\rm d}x.
\end{eqnarray}

Following above similar procedure, we can get higher order estimates
\begin{eqnarray}\label{4.6}
&&\hspace{-0.9in} \frac12 \frac{\rm d}{{\rm d}t} \big( \mu\|\tn_{\sigma x}\|^2
        + \frac{\epsilon^2}{18}\|\tn_{\sigma xx}\|^2
        + \|\tJ_{\sigma x}\|^2
        + \frac{2}{3\mu}\|\tE_{\sigma x}\|^2 \big)
  + \mu\nu \|\tn_{\sigma xx}\|^2
 \nonumber \\
&&\hspace{-0.6in}   + (\frac{\nu\epsilon^2}{18}+\mu\sigma) \|\tn_{\sigma xxx}\|^2 + \frac{\epsilon^2}{18}\sigma \|\tn_{\sigma xxxx}\|^2
  + \frac{1}{\tau} \|\tJ_{\sigma x}\|^2
  + \nu \|\tJ_{\sigma xx}\|^2
  + \sigma \|\tJ_{\sigma xxx}\|^2   \nonumber \\
&&\hspace{-0.6in}
  + \frac{4}{3\mu\tau} \|\tE_{\sigma x}\|^2
  + \frac{2\nu}{3\mu} \|\tE_{\sigma xx}\|^2
  + \frac{2\sigma}{3\mu} \|\tE_{\sigma xxx}\|^2 \nonumber \\
  &&\hspace{-1.1in} =\int_0^1 ({n^*} \tV_{\sigma x})_x\tJ_{\sigma x} {\rm d}x
    + \frac{2}{\mu\tau} \int_0^1 \tn_{\sigma x} \tE_{\sigma x} {\rm d}x
    + \int_0^1 g_{\sigma 1x} \tJ_{\sigma x} {\rm d}x  \nonumber \\
&&\hspace{-0.6in}
    + \frac{2}{3\mu} \int_0^1 (g_{\sigma 2}-\frac52 \tJ_{\sigma x})_x \tE_{\sigma x}
      {\rm d}x.
\end{eqnarray}
Differentiating $(\ref{4.1})_1, (\ref{4.1})_2$ and $(\ref{4.1})_3$
twice with respect to $x$, then testing the resultant equations by
$\mu\tn_{\sigma xx}, \tJ_{\sigma xx}$ and $\displaystyle
\frac{18\nu^2\gamma_1}{\epsilon^2\beta_1}\tE_{\sigma xx}$,
respectively, and integrating them over $[0,1]$, we get
\begin{equation}\label{new1}
 \frac{\mu}{2} \frac{\rm d}{{\rm d}t} \|\tn_{\sigma xx}\|^2
  + \mu \int_0^1 \tJ_{\sigma xxx} \tn_{\sigma xx} {\rm d}x
  = - \mu\nu \|\tn_{\sigma xxx}\|^2
    - \mu\sigma \|\tn_{\sigma xxxx}\|^2,
\end{equation}
\begin{eqnarray}
&&\hspace{-0.5in} \frac12 \frac{\rm d}{{\rm d}t} \big( \|\tJ_{\sigma xx}\|
  + \frac{\epsilon^2}{18}\|\tn_{\sigma xxx}\|^2 \big)
  + \frac{\epsilon^2\nu}{18} \|\tn_{\sigma xxxx}\|^2
  + \frac{\epsilon^2}{18} \sigma \|\tn_{\sigma xxxxx}\|^2
  + \frac{1}{\tau} \|\tJ_{\sigma xx}\|^2  \nonumber\\
&&\hspace{-0.1in} + \nu \|\tJ_{\sigma xxx}\|^2
  + \sigma \|\tJ_{\sigma xxxx}\|^2
  + \mu \int_0^1 \tn_{\sigma xxx} \tJ_{\sigma xx} {\rm d}x \nonumber\\
  &&\hspace{-0.7in} = - \frac23 \int_0^1 \tE_{\sigma xxx} \tJ_{\sigma xx} {\rm d}x
    + \int_0^1 ({n^*}\tV_{\sigma x})_{xx} \tJ_{\sigma xx} {\rm d}x
    + \int_0^1 g_{\sigma 1xx} \tJ_{\sigma xx} {\rm d}x, \label{new2}
\end{eqnarray}
and
\begin{eqnarray}
&&\hspace{-0.7in} \frac{9\nu^2\gamma_1}{\epsilon^2\beta_1}
    \frac{\rm d}{{\rm d}t} \|\tE_{\sigma xx}\|^2
  + \frac{36\nu^2\gamma_1}{\epsilon^2\tau\beta_1} \|\tE_{\sigma xx}\|^2
  + \frac{18\nu^3\gamma_1}{\epsilon^2\beta_1} \|\tE_{\sigma xxx}\|^2
  + \frac{18\nu^2\gamma_1}{\epsilon^2\beta_1}\sigma\|\tE_{\sigma xxxx}\|^2
    \nonumber \\
  &&\hspace{-0.9in}= \frac{54\nu^2\gamma_1}{\epsilon^2\tau\beta_1}
    \int_0^1 \tn_{\sigma xx}\tE_{\sigma xx} {\rm d}x
    - \frac{18\mu\nu^2\gamma_1}{\epsilon^2\beta_1}
      \int_0^1 \tJ_{\sigma xxx} \tE_{\sigma xx} {\rm d}x \nonumber \\
   &&\hspace{-0.7in}  + \frac{18\nu^2\gamma_1}{\epsilon^2\beta_1}
      \int_0^1 (g_{\sigma 2} - \frac52 \tJ_{\sigma x})_{xx}\tE_{\sigma xx} {\rm d}x. \label{new3}
\end{eqnarray}
Adding the  equalities (\ref{new1})--(\ref{new3}) gives
\begin{eqnarray}\label{4.7}
&&\hspace{-0.9in} \frac12 \frac{\rm d}{{\rm d}t} \big( \mu\|\tn_{\sigma xx}\|^2
  + \frac{\epsilon^2}{18} \|\tn_{\sigma xxx}\|^2
  + \|\tJ_{\sigma xx}\|^2
  + \frac{18\nu^2\gamma_1}{\epsilon^2\beta_1} \|\tE_{\sigma xx}\|^2
    \big) + \mu\nu \|\tn_{\sigma xxx}\|^2
 \nonumber \\
&& \hspace{-0.8in}   + (\frac{\epsilon^2\nu}{18}+\mu\sigma)\|\tn_{\sigma xxxx}\|^2 + \frac{\epsilon^2}{18}\sigma \|\tn_{\sigma xxxxx}\|^2
  + \frac{1}{\tau} \|\tJ_{\sigma xx}\|^2
  + \nu \|\tJ_{\sigma xxx}\|^2
  + \sigma \|\tJ_{\sigma xxxx}\|^2  \nonumber \\
&& + \frac{36\nu^2\gamma_1}{\epsilon^2\tau\beta_1}
    \|\tE_{\sigma xx}\|^2
  + \frac{18\nu^3\gamma_1}{\epsilon^2\beta_1} \|\tE_{\sigma xxx}\|^2
  + \frac{18\nu^2\gamma_1}{\epsilon^2\beta_1}\sigma
    \|\tE_{\sigma xxxx}\|^2   \nonumber \\
 &&\hspace{-1.1in} =\int_0^1 ({n^*}\tV_{\sigma x})_{xx}\tJ_{\sigma xx} {\rm d}x
    + \frac{54\nu^2\gamma_1}{\epsilon^2\tau\beta_1}
      \int_0^1 \tn_{\sigma xx} \tE_{\sigma xx} {\rm d}x
    - \frac23 \int_0^1 \tE_{\sigma xxx} \tJ_{\sigma xx} {\rm d}x
    \nonumber\\
&& \hspace{-0.9in}   - \frac{18\mu\nu^2\gamma_1}{\epsilon^2\beta_1}
      \int_0^1 \tJ_{\sigma xxx} \tE_{\sigma xx} {\rm d}x + \int_0^1 g_{\sigma 1xx} \tJ_{\sigma xx} {\rm d}x
    + \frac{18\nu^2\gamma_1}{\epsilon^2\beta_1}
      \int_0^1 (g_{\sigma 2}-\frac52 \tJ_{\sigma x})_{xx} \tE_{\sigma xx} {\rm d}x.
\end{eqnarray}

Utilizing the Sobolev embedding inequalities and skillful calculations we obtain
\begin{eqnarray}\label{4.8}
&&\hspace{-0.9in} \int_0^1 g_{\sigma 1} \tJ_{\sigma} {\rm d}x
  + \frac{2}{3\mu} \int_0^1 (g_{\sigma 2}-\frac52 \tJ_{\sigma x}) \tE_{\sigma} {\rm d}x
  + \int_0^1 g_{\sigma 1x} \tJ_{\sigma x} {\rm d}x
    \nonumber \\
&& \hspace{-0.8in} + \frac{2}{3\mu} \int_0^1 (g_{\sigma 2}-\frac52 \tJ_{\sigma x})_x \tE_{\sigma x} {\rm d}x + \int_0^1 g_{\sigma 1xx} \tJ_{\sigma xx} {\rm d}x
  + \frac{18\nu^2\gamma_1}{\epsilon^2\beta_1}
    \int_0^1 (g_{\sigma 2}-\frac52 \tJ_{\sigma x})_{xx} \tE_{\sigma xx} {\rm d}x
    \nonumber \\
&& \hspace{-1.1in} \leqslant
    C \big( \|\tn_{\sigma}\|_2^2 + \|\tJ_{\sigma}\|_2^2
            + \|\tE_{\sigma}\|_2^2
            + \|\tJ_{\sigma}\|_1 \|\tJ_{\sigma}\|_2^2
            + \|\tJ_{\sigma}\|_1^2 \|\tE_{\sigma}\|_2^2
            + \|\tJ_{\sigma}\|_1^2 \|\tV_{\sigma x}\|_2^2 \big)
              \nonumber \\
&&  \hspace{-0.9in}  + C \epsilon^2 \big( \|\tn_{\sigma x}\|_2^2
            + \|\tJ_{\sigma}\|_2^2 \|\tn_{\sigma xx}\|_1^2
            + \|\tJ_{\sigma}\|_1^2\|\tn_{\sigma x}\|_1^2\|\tn_{\sigma xx}\|_1^2
            + \|\tn_{\sigma x}\|_1 \|\tn_{\sigma xx}\|_1^2
            + \|\tJ_{\sigma}\|_2^2 \|\tn_{\sigma}\|_2^2 \big) \nonumber \\
&& \hspace{-0.9in}   + \frac{\mu\nu}{2} \|\tn_{\sigma x}\|_2^2
    + \frac{\nu\epsilon^2}{36} \|\tn_{\sigma xx}\|_2^2
    + \frac{1}{2\tau} \|\tJ_{\sigma}\|_2^2
    + \frac{\nu}{2} \|\tJ_{\sigma x}\|_2^2
    + \frac{2}{3\mu\tau} \|\tE_{\sigma}\|_1^2 \nonumber \\
&& \hspace{-0.9in}
    + \frac{\nu}{3\mu} \|\tE_{\sigma x}\|_1^2
    + \frac{12\nu^3\gamma_1}{\epsilon^2\beta_1} \|\tE_{\sigma
xxx}\|^2,
\end{eqnarray}
for some positive constant $C$.
In addition, it follows from $(\ref{4.1})_4$ that
\begin{eqnarray} \label{4.9}
 \|\tV_{\sigma}\|^2
  = \int_0^1 \big(\int_0^x \tV_{\sigma y}(y) {\rm d}y\big)^2 {\rm d}x
  \leqslant
    \|\tV_{\sigma x}\|^2
  \leqslant
    \|\tV_{\sigma xx}\|^2
  = \lambda^{-4} \|\tn_{\sigma}\|^2.
\end{eqnarray}

Now set
$$
 \Upsilon_k(t)
  := \mu \|\tn_{\sigma}\|_k^2
     + \frac{\epsilon^2}{18} \|\tn_{\sigma x}\|_k^2
     + \|\tJ_{\sigma}\|_k^2
     + \|\tilde{e}_{\sigma}\|_k^2$$
     with
$$ \|\tilde{e}_{\sigma}\|_k^2 =
 \left\{
  \begin{array}{ll}
   \frac{2}{3\mu}\|\tE_{\sigma}\|_k^2, \ k=0,1, \medskip\\
   \frac{2}{3\mu}\|\tE_{\sigma}\|_1^2
    + \frac{18\nu^2\gamma_1}{\epsilon^2\beta_1}
      \|\tE_{\sigma xx}\|^2, \ k=2.
  \end{array}
 \right.
$$
Then, with the aid of (\ref{4.9}), we can deduce from
(\ref{4.5})--(\ref{4.8}) that there exists a constant $C$ independent
of $\sigma$ such that
\begin{eqnarray}\label{4.10}
&&  \hspace{-0.9in} \frac{\rm d}{{\rm d}t} \Upsilon_2(t)
  + \frac{\mu\nu}{2} \|\tn_{\sigma x}\|_2^2
  + (\frac{\nu\epsilon^2}{36}+\mu\sigma)\|\tn_{\sigma xx}\|_1^2
  + (\frac{\epsilon^2\nu}{54}+\mu\sigma)\|\tn_{\sigma xxxx}\|^2
  + \frac{\epsilon^2}{18}\sigma \|\tn_{\sigma xxx}\|_2^2 \nonumber \\
&& \hspace{-0.9in} + \frac{1}{2\tau} \|\tJ_{\sigma}\|_2^2
  + \frac{\nu}{2} \|\tJ_{\sigma x}\|_2^2
  + \sigma \|\tJ_{\sigma xx}\|_2^2
  + \frac{2}{3\mu\tau} \|\tE_{\sigma}\|_1^2
  + \frac{\nu}{3\mu} \|\tE_{\sigma x}\|_1^2
  + \frac{2\sigma}{3\mu} \|\tE_{\sigma xx}\|_1^2 \nonumber \\
&& \hspace{-0.9in}  + \frac{18\nu^2\gamma_1}{\epsilon^2\tau\beta_1}\|\tE_{\sigma xx}\|^2
  + \frac{6\nu^3\gamma_1}{\epsilon^2\beta_1} \|\tE_{\sigma xxx}\|^2
  + \frac{18\nu^2\gamma_1}{\epsilon^2\beta_1} \sigma
    \|\tE_{\sigma xxxx}\|^2  \nonumber \\
 &&  \hspace{-1.1in}  \leqslant
    C \Big( \Upsilon_2(t) + \Upsilon_2^2(t)
            + \Upsilon_1(t) \Upsilon_2(t)
            + \Upsilon_1^2(t) \Upsilon_2(t) \Big).
\end{eqnarray}
It then follows that there exists a time $0 < t_1^* \leqslant
t_{\sigma}$ such that $\Upsilon_2(\cdot) \in
\mathcal{C}^1([0,t_1^*])$ and $\Upsilon_2(t) \leqslant
2\Upsilon_2(0)$ for $0 \leqslant t \leqslant t_1^*$. Moreover, for
$0\leqslant t'\leqslant t'' \leqslant t_{\sigma}$, we have
\begin{eqnarray}\label{+1}
\hspace{0.9in} \big|\Upsilon_2(t'')-\Upsilon_2(t')\big|
 \leqslant C |t''-t'|,
\end{eqnarray}
where $C$ does not depend on $\sigma$. The estimate (\ref{+1}) gives
us an estimate from below for the earliest time $t_2^*>0$ at which
the solution $(\tn_{\sigma}, \tJ_{\sigma}, \tE_{\sigma})$ is able to
violate the conditions (\ref{4.3}).

Let $t^*:=\min\big\{t_1^*, t_2^*\big\}$, we can also conclude from
(\ref{4.1}) and (\ref{4.10}) that there exists a constant $C$
independent of $\sigma$ such that
\begin{eqnarray}\label{4.11}
 \hspace{-0.9in}\left \{
  \begin{array}{ll}
   \|\tn_{\sigma}\|_{L^{\infty}(0,t^*; H^3)}
    + \|\tn_{\sigma}\|_{L^2(0,t^*; H^4)}
    \leqslant C, \quad
   \|\tV_{\sigma xx}\|_{L^{\infty}(0,t^*; H^3)}
    + \|\tV_{\sigma xx}\|_{L^2(0,t^*; H^4)}
    \leqslant C, \medskip \\
   \|\tJ_{\sigma}\|_{L^{\infty}(0,t^*; H^2)}
    + \|\tJ_{\sigma}\|_{L^2(0,t^*; H^3)}
    \leqslant C, \quad
   \|\tE_{\sigma}\|_{L^{\infty}(0,t^*; H^2)}
    + \|\tE_{\sigma}\|_{L^2(0,t^*; H^3)}
    \leqslant C, \medskip \\
   \|\partial_t \tn_{\sigma}\|_{L^{\infty}(0,t^*; H^{-1})}
    + \|\partial_t \tJ_{\sigma}\|_{L^{\infty}(0,t^*; H^{-2})}
    + \|\partial_t \tE_{\sigma}\|_{L^{\infty}(0,t^*; H^{-2})}
    \leqslant C.
  \end{array}
 \right.
\end{eqnarray}

\subsection{The convergence of a subsequence of
$\{(\tn_{\sigma}, \tJ_{\sigma}, \tE_{\sigma}, \tV_{\sigma})\}_{\sigma}$
as $\sigma \rightarrow 0$. } \label{sec:4.12}

Since the embedding $H^{k+1}(0,1)\hookrightarrow H^k(0,1),
k=0,1,2,3$ is compact, there exist a subsequence of $\{\tn_{\sigma}\}_{\sigma},
\{\tJ_{\sigma}\}_{\sigma}, \{\tE_{\sigma}\}_{\sigma},
\{\tV_{\sigma}\}_{\sigma}$, still denoted by the
same symbols,  and $\tn, \tJ, \tE, \tV$ such that as $\sigma
\rightarrow 0$,
$$  \begin{array}{ll}
   \tn_{\sigma} \rightharpoonup^* \tn \ \mbox{weakly star in} \
    L^{\infty}(0,t^*; H^3), \medskip\\
   \partial_t \tn_{\sigma} \rightharpoonup^* \partial_t \tn \
    \mbox{weakly star in} \ L^{\infty}(0,t^*; H^{-1}), \medskip \\
   \tn_{\sigma} \rightharpoonup \tn \ \mbox{weakly in} \
    L^2(0,t^*; H^4),  \medskip \\
   \tJ_{\sigma} \rightharpoonup^* \tJ \ \mbox{weakly star in} \
    L^{\infty}(0,t^*; H^2), \medskip \\
   \partial_t \tJ_{\sigma} \rightharpoonup^* \partial_t \tJ \
    \mbox{weakly star in} \ L^{\infty}(0,t^*; H^{-2}), \medskip \\
   \tJ_{\sigma} \rightharpoonup \tJ \ \mbox{weakly in} \
    L^2(0,t^*; H^3),  \medskip \\
   \tE_{\sigma} \rightharpoonup^* \tE \ \mbox{weakly star in} \
    L^{\infty}(0,t^*; H^2), \medskip \\
   \partial_t \tE_{\sigma} \rightharpoonup^* \partial_t \tE \
    \mbox{weakly star in} \ L^{\infty}(0,t^*; H^{-2}), \medskip \\
   \tE_{\sigma} \rightharpoonup \tE \ \mbox{weakly in} \
    L^2(0,t^*; H^3),   \end{array}$$

    $$\begin{array}{ll}
   \tn_{\sigma} \rightarrow \tn \ \mbox{strongly in} \
    \mathcal{C}([0,t^*]; H^{3-\iota}) \ \mbox{and} \
    L^2(0,t^*; H^{4-\iota}), \ \ \iota > 0, \medskip \\
   \tJ_{\sigma} \rightarrow \tJ \ \mbox{strongly in} \
    \mathcal{C}([0,t^*]; H^{2-\iota}) \ \mbox{and} \
    L^2(0,t^*; H^{3-\iota}), \ \ \iota > 0, \medskip \\
   \tE_{\sigma} \rightarrow \tE \ \mbox{strongly in} \
    \mathcal{C}([0,t^*]; H^{2-\iota}) \ \mbox{and} \
    L^2(0,t^*; H^{3-\iota}), \ \ \iota > 0, \medskip \\
   \tV_{\sigma x} \rightarrow \tV_x \ \mbox{strongly in} \
    \mathcal{C}([0,t^*]; H^{4-\iota}) \ \mbox{and} \
    L^2(0,t^*; H^{5-\iota}), \ \ \iota > 0.
  \end{array}$$

\subsection{Existence of local solutions}
In this subsection, we  show that the limit $(\tn, \tJ, \tE, \tV)$
of $(\tn_{\sigma}, \tJ_{\sigma}, \tE_{\sigma}, \tV_{\sigma})$ in the
previous subsection is in fact a local solution of IBVP
(\ref{3.2})--(\ref{3.3}).

Based on the convergence shown in Section \ref{sec:4.12}, it is easy
to check that
\begin{eqnarray*}
 - \lambda^2 \tV_{xx} = - \tn, \quad \forall (t,x)
   \in [0,t^*] \times [0,1].
\end{eqnarray*}
Let $\phi \in H_0^2(0,1)$ be a test function.  Then due to
Sect. \ref{sec:4.12}, for any $t \in [0, t^*]$, we have
\begin{eqnarray*}
&&\hspace{-0.8in} \left| \int_0^t\int_0^1(g_{\sigma 1}-g_1)\phi{\rm d}x{\rm d}t\right|
    \nonumber \\
  && \hspace{-1in} = \left| \frac23 \int_0^t \int_0^1
    \left(\frac{(J^*+\tJ_{\sigma})^2}{{n^*}+\tn_{\sigma}} - \frac{(J^*)^2}{{n^*}}
         - \frac{(J^*+\tJ)^2}{{n^*}+\tn} + \frac{{J^*}^2}{{n^*}} \right)
         \phi_x  {\rm d}x {\rm d}t  \right. \\
&&\hspace{-0.8in}  - \frac{\epsilon^2}{18} \int_0^t \int_0^1
      \left( \frac{\big( ({n^*}+\tn_{\sigma})_x^2 \big)_x}{{n^*}+\tn_{\sigma}}
            - \frac{\big( ({n^*}+\tn)_x^2 \big)_x}{{n^*}+\tn}
            - \frac{({n^*}+\tn_{\sigma})_x^3}{{n^*}+\tn_{\sigma}}
            + \frac{({n^*}+\tn)_x^3}{{n^*}+\tn} \right) \phi
              {\rm d}x {\rm d}t  \\
&&\hspace{-0.8in}   +  \left.\int_0^t \int_0^1 (V^*_{x}\tn_{\sigma} - V^*_{x}\tn) \phi
      {\rm d}x {\rm d}t
    + \int_0^t \int_0^1 (\tn_{\sigma}\tV_{\sigma x} - \tn \tV_x) \phi
      {\rm d}x {\rm d}t  \right|  \\
&& \hspace{-1in} \rightarrow  \,0 \quad \mbox{as}\quad \sigma
\rightarrow 0,
\end{eqnarray*}
and
$$
 \sigma \left| \int_0^t \int_0^1 \tJ_{\sigma xxxx} \phi
                {\rm d}x {\rm d}t \right|
   = \sigma \left| \int_0^t \int_0^1 \tJ_{\sigma xx} \phi_{xx}
                {\rm d}x {\rm d}t \right| \rightarrow 0 \quad \mbox{as}\quad \sigma
   \rightarrow 0.
$$

In addition, for any $t\, \in [0, t^*]$, it can be shown that as
$\sigma \rightarrow 0$
$$
  \begin{array}{ll}
   \displaystyle \int_0^t \int_0^1 (\tJ_{\sigma t} - \tJ_t) \phi {\rm d}x {\rm d}t
   \rightarrow 0,   \qquad
   - \frac{\epsilon^2}{18} \int_0^t \int_0^1
     (\tn_{\sigma xxx} - \tn_{xxx}) \phi {\rm d}x {\rm d}t
   \rightarrow 0,  \medskip \\
   \displaystyle \frac23\int_0^t\int_0^1(\tE_{\sigma x}-\tE_x)\phi{\rm d}x {\rm d}t
   \rightarrow 0,   \quad
   - \int_0^t \int_0^1 ({n^*} \tV_{\sigma x} - {n^*} \tV_x) \phi
     {\rm d}x {\rm d}t
   \rightarrow 0,  \medskip \\
   \displaystyle \mu \int_0^t \int_0^1 (\tn_{\sigma x} - \tn_x) \phi
    {\rm d}x {\rm d}t  \rightarrow 0,  \quad
   \int_0^t \int_0^1 \big[(\nu \tJ_{\sigma xx} - \frac{1}{\tau}\tJ_{\sigma})
     - (\nu \tJ_{xx} - \frac{1}{\tau} \tJ) \big] \phi {\rm d}x {\rm d}t
   \rightarrow 0.
  \end{array}
$$
Thus, taking the limit $\sigma \rightarrow 0$ gives
$$
 \int_0^t \int_0^1 (\tJ_t - \frac{\epsilon^2}{18}\tn_{xxx}
   + \frac23 \tE_x - {n^*} \tV_x + \mu\tn_x - g_1) \phi {\rm d}x{\rm d}t
 = \int_0^t \int_0^1 (\nu \tJ_{xx} - \frac{1}{\tau}\tJ){\rm d}x{\rm d}t \quad \forall \,\, t \in [0,t^*].
$$

Similarly, we can also get
\begin{eqnarray*}
 &&\int_0^t \int_0^1 (\tn_t + \tJ_x) \phi {\rm d}x{\rm d}t
 = \nu \int_0^t \int_0^1 \tn_{xx} \phi {\rm d}x{\rm d}t \\
 &&\int_0^t \int_0^1 [\tE_t - \frac{3}{\tau}\tn + (\mu+\frac52)\tJ_x
                    - g_2] \phi {\rm d}x{\rm d}t
 = \int_0^t\int_0^1 (\nu \tE_{xx}-\frac{2}{\tau} \tE)\phi{\rm d}x{\rm d}t.
\end{eqnarray*}
Therefore, $(\tn,\tJ,\tE,\tV)$ is a local solution of the IBVP
(\ref{3.2})--(\ref{3.3}) on $[0,t^*]$  with
$$
  \begin{array}{ll}
   \tn \in H^1(0,t^*; H^2(0,1)) \cap L^2(0,t^*; H^4(0,1)), \medskip\\
   \tV \in H^1(0,t^*; H^4(0,1)) \cap L^2(0,t^*; H^6(0,1)), \medskip\\
   \tJ \in H^1(0,t^*; H^1(0,1)) \cap L^2(0,t^*; H^3(0,1)), \medskip\\
   \tE \in H^1(0,t^*; H^1(0,1)) \cap L^2(0,t^*; H^3(0,1)).
  \end{array}
$$

\subsection{Uniqueness of solutions}

Let $(\tn_1, \tJ_1, \tE_1, \tV_1)$ and $(\tn_2, \tJ_2, \tE_2, \tV_2)$ be two
solutions of IBVP (\ref{3.2})--(\ref{3.3}) with the same initial and
boundary conditions.   Set
\begin{eqnarray*}
 \hat{n} = \tn_1 - \tn_2, \quad
 \hat{J} = \tJ_1 - \tJ_2, \quad
 \hat{\mathcal{E}} = \tE_1 - \tE_2, \quad
 \hat{V} = \tV_1 - \tV_2.
\end{eqnarray*}
It then follows from (\ref{3.2}) that
\begin{eqnarray}\label{4.13}
 \left\{
  \begin{array}{ll}
   \hat{n}_t - \nu\hat{n}_{xx} + \hat{J}_x = 0,
    \medskip \\
   \displaystyle \hat{J}_t - \nu\hat{J}_{xx} + \frac{1}{\tau}\hat{J}
    - \frac{\epsilon^2}{18} \hat{n}_{xxx}
    + \frac23 \hat{\mathcal{E}}_x
    - {n^*} \hat{V}_x
    + \mu \hat{n}_x
   = \hat{g}_1, \medskip \\
   \displaystyle \hat{\mathcal{E}}_t - \nu\hat{\mathcal{E}}_{xx} + \frac{2}{\tau}\hat{\mathcal{E}}
    - \frac{3}{\tau} \hat{n}
    + \mu \hat{J}_x
   = \hat{g}_2, \medskip \\
   - \lambda^2 \hat{V}_{xx}
   = - \hat{n},
  \end{array}
 \right.
\end{eqnarray}
where $\hat{g}_1$ and $\hat{g}_2$ are listed in \ref{app:para}.

Testing $(\ref{4.13})_2, (\ref{4.13})_3$ and $(\ref{4.13})_4$ by
$\hat{J}$, $\xi$ (to be specified later), $\hat{\mathcal{E}}$ and
$\hat{V}$, respectively, then integrating the resultant equations
over $[0,1]$, this gives
\begin{eqnarray}
&&\hspace{-0.8in} \frac12 \frac{\rm d}{{\rm d}t} \|\hat{J}\|^2
  + \nu \|\hat{J}_x\|^2
  + \frac{1}{\tau} \|\hat{J}\|^2
  - \frac{\epsilon^2}{18} \int_0^1 \hat{n}_{xxx}\hat{J}{\rm d}x
  + \mu \int_0^1 \hat{n}_x \tJ {\rm d}x \nonumber \\
  &&\hspace{-1in}= \frac12 \frac{\rm d}{{\rm d}t} \left(\|\hat{J}\|^2
      + \frac{\epsilon^2}{18}\|\hat{n}_x\|^2
      + \mu \|{\tn}\|^2 \right)
    + \nu\|\hat{J}_x\|^2
    + \frac{1}{\tau} \|\hat{J}\|^2
    + \mu\nu \|\hat{n}_x\|^2
    + \frac{\epsilon^2\nu}{18} \|\hat{n}_{xx}\|^2 \nonumber \\
  &&\hspace{-1in}=  \int_0^1 (\hat{g}_1 - \frac23\hat{\mathcal{E}}_x + {n^*}\hat{V}_x) \hat{J}
    {\rm d}x,  \label{4.14} \\
&&\hspace{-0.8in}  \frac{\xi}{2} \frac{\rm d}{{\rm d}t} \|\hat{\mathcal{E}}\|^2
  + \frac{2\xi}{\tau} \|\hat{\mathcal{E}}\|^2
  + \xi\nu \|\hat{\mathcal{E}}_x\|^2
  = \xi \int_0^1 \big( \hat{g}_2 - \mu \hat{J}_x
    + \frac{3}{\tau}\hat{n} \big) \hat{\mathcal{E}} {\rm d}x, \label{4.15}
\end{eqnarray}
and
\begin{eqnarray*}
\lambda^2 \|\hat{V}_x\|^2
 = \int_0^1 \hat{n} \hat{V} {\rm d}x
 \leqslant
   \|\hat{n}\| \|\hat{V}\|,
\end{eqnarray*}
which, along with the Poincar\'e inequality, implies that
$\|\hat{V}_x\| \leqslant C \|\hat{n}\|$ for some positive constant
$C$.

By the definitions of $\hat{g}_1$
and $\hat{g}_2$,  the right-hand sides of the
equalities (\ref{4.14}) and (\ref{4.15}) can be estimated to satisfy, respectively,
\begin{eqnarray}
&&\hspace{-0.8in} \left| \int_0^1 (\hat{g}_1 - \frac23\hat{\mathcal{E}}_x
                 + {n^*}\hat{V}_x) \hat{J} {\rm d}x \right| \nonumber \\
 &&\hspace{-1in} \leqslant
  C \Big[ (1+\|{\tn}_i\|_{L^{\infty}}+\|{\tn}_i\|_{L^{\infty}}^2) \|\hat{n}\|
          + (1+\|{\tn}_i\|_{L^{\infty}})(1+\|{\tn}_i\|_{L^{\infty}})\|\hat{J}\|
    \Big] \|\hat{J}_x\|  \nonumber \\
&& \hspace{-0.8in} + C\epsilon^2 \Big\{
     \Big( \|\hat{n}\|_2 + \|{\tn}_{ix}\|_{L^{\infty}}^2 \|\hat{n}_x\|
       +(\|{\tn}_i\|_{L^{\infty}}+\|{\tn}_{ix}\|_{L^{\infty}})\|\hat{n}_x\|_1
     \Big) \|\hat{J}\| \nonumber \\
&&\hspace{-0.3in} + \Big( (1+\|{\tn}_i\|_{L^{\infty}})\|{\tn}_{ix}\|_{L^{\infty}}
              \|\hat{n}_x\|
              + \big( \|{\tn}_{ix}\|_{L^{\infty}}
                      + \|{\tn}_{ix}\|_{L^{\infty}}^2 \big) \|\hat{n}\|
       \Big) \|\hat{J}_x\| \nonumber \\
&&\hspace{-0.3in} + (1+\|{\tn}_{ix}\|_{L^{\infty}}+\|{\tn}_{ix}\|_{L^{\infty}}^2)
             \|{\tn}_i\|_{L^{\infty}} \|\hat{J}\| \|\hat{J}_x\|  \nonumber \\
            && \hspace{-0.3in} + (1+\|{\tn}_{ix}\|_{L^{\infty}}+\|{\tn}_{ix}\|_{L^{\infty}}^2
                +\|{\tn}_{ix}\|_{L^{\infty}}^3) \|\hat{n}\| \|\hat{J}\|
      \Big\} \nonumber \\
&&  \hspace{-0.8in}   + \|V^*_{x}\|_{L^{\infty}} \|\hat{n}\| \|\hat{J}\|
     + \big( \|{\tn}_1\|_{L^{\infty}} \|\hat{V}_x\|^2
             + \|{\tV}_{2x}\|_{L^{\infty}} \|\hat{n}\| \big)
            \|\hat{J}\|, \ \ i=1,2, \label{4.16}\end{eqnarray}
            and
            \begin{eqnarray}
&&\hspace{-0.8in}  \left| \xi \int_0^1 ( \hat{g}_2 - \mu \hat{J}_x
        + \frac{3}{\tau}\hat{n}) \hat{\mathcal{E}} {\rm d}x \right|
          \nonumber \\
 &&\hspace{-1in}  \leqslant
   C \xi \left( (1+\|{\tn}_i\|_{L^{\infty}}) \|\hat{\mathcal{E}}\|
     + (1+\|\tE_i\|_{L^{\infty}}) \|\hat{J}\|
     + (\|{\tn}_i\|_{L^{\infty}} + \|{\tn}_i\|_{L^{\infty}}^2) \|\hat{J}\|
     \right) \|\hat{\mathcal{E}}_x\| \nonumber \\
&& \hspace{-0.8in}  + C \xi\epsilon^2 \Big[
     (1+\|{\tn}_{ix}\|_{L^{\infty}})(\|\hat{n}_x\| + \|\hat{J}\|)
     + (1+\|{\tn}_{ix}\|_{L^{\infty}})\|{\tn}_i\|_{L^{\infty}} \|\hat{n}_x\|
       \nonumber \\
&&  \hspace{-0.2in}  + (1+\|{\tn}_i\|_{L^{\infty}})\|\hat{n}_{xx}\|
     + \|{\tn}_{ixx}\| \|\hat{J}\|_{L^{\infty}} \Big]
     \|\hat{\mathcal{E}}_x\| \nonumber \\
&&  \hspace{-0.8in}   + C \xi \big( \|\hat{n}\| + \|\hat{J}\|
     + \|{\tV}_{ix}\|_{L^{\infty}} \|\hat{J}\|
     + (1+\|{\tn}_i\|_{L^{\infty}}) \|\hat{V}_x\| \big) \|\hat{\mathcal{E}}\|,
        \ \ i=1,2 \label{4.17}
\end{eqnarray}
for some positive constant $C$.

Finally, taking (\ref{4.14})--(\ref{4.17}) into account, choosing an
appropriate value for the constant $\xi$ and applying Young's
inequality, we can obtain
$$
 \frac{\rm d}{{\rm d}t}
  \big( \mu\|\hat{n}\|^2
        + \frac{\epsilon^2}{18} \|\hat{n}_x\|^2
        + \|\hat{J}\|^2
        + \xi \|\hat{\mathcal{E}}\|^2 \big)
  \leqslant
   C \big( \|\hat{n}\|^2 + \epsilon^2 \|\hat{n}_x\|^2
           + \|\hat{J}\|^2 + \xi \|\hat{\mathcal{E}}\|^2 \big).
$$
It then follows from Gronwall's inequality that
\begin{eqnarray*}
 \hat{n} = 0, \quad \hat{J} = 0, \quad \hat{\mathcal{E}} = 0, \quad \hat{V} = 0.
\end{eqnarray*}
The proof of Theorem \ref{T1.3} is thus complete.
\section{Stability of the steady-state solution} \label{sec:stab}

In this section, we show the stability of the steady-state solution
$(n^*, J^*, \mathcal{E}^*, V^*)$. More precisely, we prove the
solution $(\tn(x,t), \tJ(x,t), \tE(x,t), \tV(x,t))$  of the IBVP
(\ref{3.2})--(\ref{3.3}) decays to 0 exponentially as $t \to
\infty$.   For this purpose, we first show the global existence of
solutions, i.e., the local solution established in Section
\ref{sec:ls} exists globally in time.   Let $(n(x,t), J(x, t),
\mathcal{E}(x, t), V(x,t))$ be the local solution to the IBVP
(\ref{1.2})--(\ref{1.4}) shown in Theorem \ref{T1.3} that exists on
$t \in (0, t^*)$ and set
\begin{eqnarray}\label{3.4}
 \delta_{t^*} := \max\limits_{0\leqslant t \leqslant t^*}
             (\|\tn(t)\|_3^2+\|\tJ(t)\|_2^2+\|\tE(t)\|_2^2).
\end{eqnarray}
The following lemma gives an upper bound of the solution $(\tn(x,t),
\tJ(x,t), \tE(x,t), \tV(x,t))$ of  the IBVP
(\ref{3.2})--(\ref{3.3}).
\begin{lem}\label{L3.1}~
Let Assumptions $\mathbf{(A1)}-\mathbf{(A2)}$ and the condition
(\ref{newcond1}) hold.  Then the local solution $(\tn(x,t),
\tJ(x,t), \tE(x,t), \tV(x,t))$ of the IBVP (\ref{3.2})--(\ref{3.3})
satisfies
\begin{eqnarray}\label{3.5}
 & \|{\tn}(t)\|_3^2 + \|{\tJ}(t)\|_2^2 + \|{\tE}(t)\|_2^2 + \|{\tV}(t)\|_5^2
    \nonumber \\
 & + \varsigma_2 \int_0^t
    e^{\varsigma_1(s-t)}(\|{\tn}(s)\|_4^2+\|{\tJ}(s)\|_3^2+\|{\tE}(s)\|_3^2+\|{\tn}_s(s)\|_2^2)
    {\rm d}s  \nonumber \\
  \leqslant &
   e^{-\varsigma_1 t}
   (\|{\tn}_0\|_3^2+\|{\tJ}_0\|_2^2+\|{\tE}_0\|_2^2+\|{\tV}_0\|_3^2)
\end{eqnarray}
with some positive constants $\varsigma_1$ and $\varsigma_2$,
provided $\delta_0+\delta_{t^*}+J_b$ is sufficiently small.
   \end{lem}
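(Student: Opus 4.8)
The plan is to derive, for the perturbation system (\ref{3.2}), an energy--dissipation inequality
\[
 \frac{\rm d}{{\rm d}t}\Upsilon(t)+\varsigma_1\Upsilon(t)+\varsigma_2\mathcal{D}(t)\leqslant 0 ,
\]
where $\Upsilon(t)$ is a functional equivalent to $\|\tn(t)\|_3^2+\|\tJ(t)\|_2^2+\|\tE(t)\|_2^2$ and $\mathcal{D}(t)$ collects the dissipation $\|\tn(t)\|_4^2+\|\tJ(t)\|_3^2+\|\tE(t)\|_3^2+\|\tn_t(t)\|_2^2$. The $\tV$--norms are slaved to $\tn$ by the Poisson equation $-\lambda^2\tV_{xx}=-\tn$, which together with the homogeneous boundary data and the Poincar\'e inequality gives $\|\tV\|_{k+2}\leqslant C\|\tn\|_k$ for $k\geqslant 0$, in particular $\|\tV\|_5\leqslant C\|\tn\|_3$. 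Granting the inequality above, multiplying by $e^{\varsigma_1 t}$, integrating over $[0,t]$ and using $\Upsilon(0)\sim\|\tn_0\|_3^2+\|\tJ_0\|_2^2+\|\tE_0\|_2^2+\|\tV_0\|_3^2$ then yields (\ref{3.5}).

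To construct $\Upsilon$ and $\mathcal{D}$ I would run, on the limit system (\ref{3.2}) (that is, with $\sigma=0$ and with the genuine coefficient $n_0=n^*+\tn$ in the momentum equation), the same three--level scheme as in Section~\ref{sec:ls}: for $k=0,1,2$, differentiate the first three equations of (\ref{3.2}) $k$ times in $x$ and test against $\mu\,\partial_x^k\tn$, $\partial_x^k\tJ$, and a weighted $\partial_x^k\tE$ — with weight $\frac{2}{3\mu}$ for $k=0,1$ and a large constant of order $\nu^2/(\epsilon^2\beta_1)$ for $k=2$, as in (\ref{4.7}), so that the sign--indefinite coupling $\int_0^1\tE_{xxx}\tJ_{xx}\,{\rm d}x$ is absorbed by the $\|\tE_{xxx}\|^2$ dissipation. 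The decisive structural input, exactly as in (\ref{4.5}), is that the third order quantum term $-\frac{\epsilon^2}{18}\tn_{xxx}$ in the $\tJ$--equation has the favorable sign: one integration by parts followed by substitution of the continuity equation $\tJ_x=\nu\tn_{xx}-\tn_t$ and its $x$--derivatives turns $\frac{\epsilon^2}{18}\int_0^1\partial_x^k\tn_{xxx}\,\partial_x^k\tJ\,{\rm d}x$ into $-\frac{\epsilon^2}{36}\frac{\rm d}{{\rm d}t}\|\partial_x^k\tn_x\|^2-\frac{\nu\epsilon^2}{18}\|\partial_x^k\tn_{xx}\|^2$, which supplies the top--order pieces $\|\tn_{xxx}\|^2$ in $\Upsilon$ and $\|\tn_{xxxx}\|^2$ in $\mathcal{D}$ without any bi--Laplacian regularization. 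Adding the three levels with fixed positive weights produces a basic energy identity.

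The term I expect to cost the most is the coupling through the electrostatic potential. Writing $n_0=n^*+\tn$, the part $\int_0^1 n^*\tV_x\tJ\,{\rm d}x$ (together with its $x$--differentiated and $\tE$--counterparts) is sign--indefinite and must be handled as in Section~\ref{sec:ss}: one additionally tests the Poisson equation against $\nu\rho\,\tV$ and its derivatives, eliminates $\tn$ through $\tn=\lambda^2\tV_{xx}$, and completes the square. Assumption $\mathbf{(A2)}$ together with the strengthened condition (\ref{newcond1}) make the resulting quadratic form in $(\tn,\tV_x)$ strictly positive definite — this is exactly where the factors $\mu\nu\lambda^2+\nu\rho(x)$ and $\tau(2+\tau\nu)(2\mu+5)$ of (\ref{newcond1}) enter — so these terms move to the left--hand side and still leave a positive multiple of $\|\tn\|^2+\lambda^2\|\tV_x\|^2$ inside $\mathcal{D}$; this coercivity is genuinely needed, since the continuity equation carries no damping and so yields no $\|\tn\|^2$ dissipation by itself. \emph{The main obstacle is to verify that this completing--the--square argument, carried consistently through all three derivative levels, is compatible with the one choice of relative weights already forced by the quantum term and by the $\int\tE_{xxx}\tJ_{xx}$ term}; concretely it reduces to checking positive definiteness of a single explicit $O(1)$ quadratic form whose coefficients are fixed by $\mathbf{(A1)}$, $\mathbf{(A2)}$ and (\ref{newcond1}).

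It then remains to absorb the nonlinear right--hand side, which consists of $g_1,g_2$ and their $x$--derivatives up to order $2$ (whose explicit forms are listed in \ref{app:para}), together with $\int_0^1\partial_x^k(\tn\,\tV_x)\,\partial_x^k\tJ\,{\rm d}x$ and its $\tE$--counterpart. Using the one--dimensional embedding $H^1(0,1)\hookrightarrow L^\infty(0,1)$, the Poisson bound $\|\tV\|_{k+2}\leqslant C\|\tn\|_k$, the steady--state estimates $\|n^*-\rho\|_3+\|J^*-J_b\|_2+\|\mathcal{E}^*-\frac32\rho\|_2\leqslant C\delta_0$ from Theorem~\ref{T1.1}, and integration by parts to shift the highest derivative off the quantum--correction terms onto the $\|\tn_{xxxx}\|^2$ dissipation, each such term is bounded by $C(\delta_0+\delta_{t^*}+J_b)\big(\Upsilon(t)+\mathcal{D}(t)\big)$. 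Taking $\delta_0+\delta_{t^*}+J_b$ sufficiently small absorbs the $\mathcal{D}$--part into the dissipation, while the $\Upsilon$--part is dominated since the coercive quadratic form from the previous step yields $\mathcal{D}(t)\gtrsim\Upsilon(t)$; this produces $\frac{\rm d}{{\rm d}t}\Upsilon+\varsigma_1\Upsilon+\varsigma_2\mathcal{D}\leqslant 0$. Finally the $\|\tn_t\|_2^2$ piece in (\ref{3.5}) is recovered a posteriori by reading $\tn_t=\nu\tn_{xx}-\tJ_x$ off the continuity equation and invoking the already controlled $\|\tn\|_4$ and $\|\tJ\|_3$, which completes the proof of Lemma~\ref{L3.1}.
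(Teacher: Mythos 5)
Your overall architecture coincides with the paper's proof: multi-level weighted energy estimates on (\ref{3.2}), the quantum term converted into $-\frac{\epsilon^2}{36}\frac{\rm d}{{\rm d}t}\|\partial_x^k\tn_x\|^2-\frac{\nu\epsilon^2}{18}\|\partial_x^k\tn_{xx}\|^2$ via the continuity equation, the nonlinearities bounded by $C(\delta_0+\delta_{t^*}+J_b)$ times energy plus dissipation, and Gronwall. Two of your choices differ harmlessly from the paper's: at the second derivative level the paper keeps the uniform weight $\frac{4}{3(2\mu+5)}$ and kills $\frac23\int_0^1(\tE_{xx}\tJ_{xx})_x\,{\rm d}x$ exactly by the boundary conditions (\ref{3.3}) rather than absorbing $\int_0^1\tE_{xxx}\tJ_{xx}\,{\rm d}x$ with a large weight as in (\ref{4.7}); and the paper extracts both $\|\tn\|_4^2$ and $\|\tn_t\|_2^2$ from the $\frac{1}{\tau}\|\tJ\|_2^2$ dissipation through the identity (\ref{3.22}) rather than a posteriori.

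There is, however, one genuine slip in where you locate the coercivity problem. The electrostatic coupling $\int_0^1 n^*\tV_x\tJ\,{\rm d}x$ is not where condition (\ref{newcond1}) is needed: after integrating by parts and substituting $\tJ_x=\nu\tn_{xx}-\tn_t$ and the Poisson equation, as in (\ref{3.10}), this term produces only \emph{favorable} contributions, namely the damping $\frac{\nu}{\lambda^2}\int_0^1 n^*\tn^2\,{\rm d}x$ and the energy $\frac{\lambda^2}{2}\frac{\rm d}{{\rm d}t}\int_0^1 n^*\tV_x^2\,{\rm d}x$, plus remainders carrying a factor $n^*_x=\mathcal{O}(\delta_0)$. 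The sign-indefinite $\mathcal{O}(1)$ coupling that (\ref{newcond1}) is designed to control is instead the relaxation source $-\frac{3}{\tau}\tn$ in the energy equation $(\ref{3.2})_3$, which after testing by $\frac{4}{3(2\mu+5)}\tE$ leaves the cross term $\frac{4}{\tau(2\mu+5)}\int_0^1\tn\,\tE\,{\rm d}x$; this must be dominated by the quadratic form built from $\mu\nu\|\tn_x\|^2+\frac{\nu}{\lambda^2}\int_0^1\rho\,\tn^2\,{\rm d}x$ and $\frac{8}{3\tau(2\mu+5)}\|\tE\|^2+\frac{4\nu}{3(2\mu+5)}\|\tE_x\|^2$, and the discriminant condition for its positive definiteness is exactly (\ref{newcond1}) (see (\ref{3.13})--(\ref{3.15})). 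Your sketch never mentions this term --- it is linear and not part of $g_2$, so it cannot be absorbed into the small nonlinear remainder --- and the quadratic form you propose to check, in $(\tn,\tV_x)$, is not the one that is at risk of degenerating. The fix is only a relabeling of your ``main obstacle'' paragraph, but as written the estimate would leave an uncontrolled $\mathcal{O}(1/\tau)$ cross term.
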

\begin{proof}
First, by (\ref{3.4}) and the Sobolev inequality, it is easy to see that
if $\delta_{t^*}$ is sufficiently small, then there exist constants
$n_-, n_+, J_-$ and $J_+$ such that
\begin{eqnarray*}
 0 < n_- \leqslant n^*+{\tn} \leqslant n_+, \quad
 J_- \leqslant {J^*}+{\tJ} \leqslant J_+.
\end{eqnarray*}
In addition, it follows from Assumption {\bf (A1)} and (\ref{1.8})
that there exists a positive constant $C$ such that
\begin{eqnarray}\label{3.6}
\left\{ \begin{array}{ll}
 \|n^*_{x}\|_2^2 + \|J^*_{x}\|_1^2 + \|\mathcal{E}^*_{x}\|_1^2
  + \|V^*\|_5^2
 \leqslant
 C\delta_0^2, \medskip \\
 \|{J^*}\|_{L^{\infty}(0,1)}
  \leqslant
C \delta_0 + J_b.
 \end{array}
\right.
\end{eqnarray}

Now, testing $(\ref{3.2})_1$ by $\mu{\tn}$ and $(\ref{3.2})_2$ by ${\tJ}$
and integrating them over $[0,1]$, then adding the resultant
equalities, we have
\begin{eqnarray}\label{3.8}
&& \hspace{-1in}\frac12 \frac{\rm d}{{\rm d}t} (\mu \|{\tn}\|^2 + \|{\tJ}\|^2)
  = - \mu\nu \|{\tn}_x\|^2
 - \nu \|{\tJ}_x\|^2
  - \frac{1}{\tau} \|{\tJ}\|^2
  + \frac{\epsilon^2}{18} \int_0^1 {\tn}_{xxx}{\tJ} {\rm d}x \nonumber \\
&& \hspace{0.5in}- \frac23 \int_0^1 {\tE}_x {\tJ} {\rm d}x
  + \int_0^1 n^* {\tV}_x {\tJ} {\rm d}x
  + \int_0^1 g_1 {\tJ} {\rm d}x,
\end{eqnarray}
where $g_1$ is given in \ref{app:para}.  We next estimate each term
on the right-hand side of (\ref{3.8}).

First, due to $(\ref{3.2})_1$ and the boundary conditions we have
\begin{eqnarray}\label{3.9}
 \hspace{-0.5in} -\frac{\epsilon^2}{18} \int_0^1 {\tn}_{xxx}{\tJ} {\rm d}x
 = \frac{\epsilon^2}{18}\int_0^1{\tn}_{xx}(\nu{\tn}_{xx}-{\tn}_t){\rm d}x
 = \frac{\epsilon^2}{18} \nu\|{\tn}_{xx}\|^2
   + \frac{\epsilon^2}{36} \frac{\rm d}{{\rm d}t}\|{\tn}_x\|^2.
\end{eqnarray}
Similarly, it follows from $(\ref{3.2})_1$ and $(\ref{3.2})_4$ that
\begin{eqnarray}\label{3.10}
 &&\hspace{-1in} -\int_0^1 n^* {\tV}_x {\tJ} {\rm d}x
 = \int_0^1 n^* {\tV} {\tJ}_x {\rm d}x
   + \int_0^1 n^*_{x} {\tV} {\tJ} {\rm d}x
 = \int_0^1 n^* {\tV} (\nu{\tn}_{xx}-{\tn}_t) {\rm d}x
   + \int_0^1 n^*_{x} {\tV} {\tJ} {\rm d}x  \nonumber \\
  && \hspace{-1in} = \frac{\nu}{\lambda^2} \int_0^1 n^*{\tn}^2 {\rm d}x
   + \nu \int_0^1 n^*_{x} {\tV}_x {\tn} {\rm d}x
   - \nu \int_0^1 n^*_{x} {\tV} {\tn}_x {\rm d}x
   - \int_0^1 n^* {\tV} {\tn}_t {\rm d}x
   + \int_0^1 n^*_{x} {\tV} {\tJ} {\rm d}x  \nonumber \\
  && \hspace{-1in}= \frac{\nu}{\lambda^2} \int_0^1 n^* {\tn}^2 {\rm d}x
   + \nu \int_0^1 n^*_{x} {\tV}_x {\tn} {\rm d}x
   - \nu \int_0^1 n^*_{x} {\tV} {\tn}_x {\rm d}x
   - \lambda^2 \int_0^1 n^* {\tV} {\tV}_{xxt} {\rm d}x
   + \int_0^1 n^*_{x} {\tV} {\tJ} {\rm d}x  \nonumber \\
 && \hspace{-1in} = \frac{\nu}{\lambda^2} \int_0^1 n^* {\tn}^2 {\rm d}x
   + \frac{\lambda^2}{2} \frac{\rm d}{{\rm d}t} \int_0^1 n^* {\tV}_x^2
     {\rm d}x
   + \lambda^2 \int_0^1 n^*_{x} {\tV} {\tV}_{xt} {\rm d}x
   + \nu \int_0^1 n^*_{x} {\tV}_x {\tn} {\rm d}x  \nonumber \\
 && \hspace{-0.7in} - \nu \int_0^1 n^*_{x} {\tV} {\tn}_x {\rm d}x
   + \int_0^1 n^*_{x} {\tV} {\tJ} {\rm d}x.
\end{eqnarray}
For the last term on the right-hand of (\ref{3.8}), we can derive
from (\ref{3.4}) and (\ref{3.6}) that
\begin{eqnarray}\label{3.11}
 \int_0^1 g_1 {\tJ} {\rm d}x
  \leqslant
   C (\delta_0+\delta_{t^*}+J_b) (\|{\tJ}\|_1^2 + \|{\tn}\|^2 + \epsilon^2\|{\tn}_{xx}\|^2
     + \|{\tV}\|_1^2),
\end{eqnarray}
for some constant $C>0$.

Inserting (\ref{3.9})--(\ref{3.11}) into (\ref{3.8})  gives
\begin{eqnarray}\label{3.12}
&&\hspace{-0.9in} \frac12 \frac{\rm d}{{\rm d}t} \big(
  \mu\|{\tn}\|^2 + \frac{\epsilon^2}{18}\|{\tn}_x\|^2 + \|{\tJ}\|^2
   + \lambda^2 \int_0^1 {n^*}{\tV}_x^2 {\rm d}x \big)
 + \frac{\epsilon^2}{18}\nu \|{\tn}_{xx}\|^2
 + \mu\nu \|{\tn}_x\|^2
 + \nu \|{\tJ}_x\|^2  \nonumber \\
&&\hspace{-0.9in} + \frac{1}{\tau} \|{\tJ}\|^2
 + \frac{\nu}{\lambda^2} \int_0^1 {n^*}{\tn}^2 {\rm d}x
 + \frac23 \int_0^1 {\tE}_x {\tJ} {\rm d}x \nonumber \\
  &&\hspace{-1in} =  \int_0^1 g_1 {\tJ} {\rm d}x
   - \lambda^2 \int_0^1 n^*_{x}{\tV}{\tV}_{xt} {\rm d}x
   - \nu \int_0^1 n^*_{x}{\tV}_x {\tn} {\rm d}x
   + \nu \int_0^1 n^*_{x}{\tV} {\tn}_x {\rm d}x
   - \int_0^1 n^*_{x}{\tV} {\tJ} {\rm d}x  \nonumber \\
 && \hspace{-1in} \leqslant
  C (\delta_0+\delta_{t^*}+J_b) \big(\|{\tn}\|_1^2 + \epsilon^2\|{\tn}_{xx}\|^2
          + \|{\tJ}\|_1^2 + \|{\tV}\|_1^2 + \|{\tV}_{xt}\|^2 \big),
\end{eqnarray}
for some constant $C> 0$.

Next, multiplying $(\ref{3.2})_3$ by $\displaystyle \frac{4}{3(2\mu+5)}{\tE}$
and integrating the resultant equality, we obtain
\begin{eqnarray}\label{3.13}
&&\hspace{-1in} \frac{2}{3(2\mu+5)} \frac{\rm d}{{\rm d}t} \|{\tE}\|^2
  + \frac{8}{3\tau(2\mu+5)} \|{\tE}\|^2
  + \frac{4\nu}{3(2\mu+5)} \|{\tE}_x\|^2
  + \frac23 \int_0^1 {\tE} {\tJ}_x {\rm d}x
  \nonumber \\
  &&= \frac{4}{\tau(2\mu+5)} \int_0^1 {\tn} {\tE} {\rm d}x + \frac{4}{3(2\mu+5)} \int_0^1 g_2 {\tE} {\rm d}x,
\end{eqnarray}
where $g_2$ is listed in \ref{app:para}.
Similar to (\ref{3.11}), we can obtain
\begin{eqnarray}\label{3.14}
\hspace{-0.5in} \frac{4}{3(2\mu+5)} \int_0^1 g_2 {\tE} {\rm d}x
  \leqslant
   C(\delta_0+\delta_{t^*}+J_b)
    (\|{\tn}\|_1^2 + \epsilon^2 \|{\tn}_{xx}\|^2 + \|{\tJ}\|^2 + \|{\tE}\|_1^2).
\end{eqnarray}
In addition, observe that
\begin{eqnarray*}
 \|{\tn}\|^2
  = \int_0^1 (\int_0^x {\tn}_y(y) {\rm d}y)^2 {\rm d}x
  \leqslant
    \int_0^1 {\tn}_y^2 {\rm d}y
  = \|{\tn}_x\|^2, \medskip \\
 \|{\tE}\|^2
  = \int_0^1 (\int_0^x {\tE}_y(y) {\rm d}y)^2 {\rm d}x
  \leqslant
    \int_0^1 {\tE}_y^2 {\rm d}y
  = \|{\tE}_x\|^2,
\end{eqnarray*}
and
\begin{eqnarray*}
 \frac{4}{\tau(2\mu+5)} \int_0^1 {\tn} {\tE} {\rm d}x
  \leqslant
   \frac{4}{\tau(2\mu+5)} \|{\tn}\| \|{\tE}\|.
\end{eqnarray*}
The condition (\ref{newcond1}) then implies that
\begin{eqnarray*}
 \frac{16}{\tau^2(2\mu+5)^2}
  - 4 \left(\frac{8}{3\tau(2\mu+5)}+\frac{4\nu}{3(2\mu+5)}\right)
      \big(\mu\nu+\frac{\nu\rho}{\lambda^2}\big)
  < 0.
\end{eqnarray*}
Thus, there exist positive constants $l_i, i=1,2$ such that
\begin{eqnarray}\label{3.15}
 l_1\|{\tn}\|_1^2 + l_2\|{\tE}\|_1^2
  \leqslant &
   \frac{8}{3\tau(2\mu+5)} \|{\tE}\|^2
   + \frac{4\nu}{3(2\mu+5)} \|{\tE}_x\|^2
   + \mu\nu \|{\tn}_x\|^2  \nonumber \\
&  + \frac{\nu}{\lambda^2} \int_0^1 \rho(x){\tn}^2 {\rm d}x
   - \frac{4}{\tau(2\mu+5)} \int_0^1 {\tn} {\tE} {\rm d}x.
\end{eqnarray}
Since $\|{n^*}-\rho(x)\|_{L^{\infty}(0,1)}\leqslant C\delta_0$ for some positive constant $C$,
taking (\ref{3.12})--(\ref{3.15}) into account, we obtain
\begin{eqnarray}\label{3.16}
&&\hspace{-0.8in} \frac{\rm d}{{\rm d}t}
  \big(\frac{\mu}{2}\|{\tn}\|^2
       + \frac{\epsilon^2}{36}\|{\tn}_x\|^2
       + \frac12 \|{\tJ}\|^2
       + \frac{\lambda^2}{2}\int_0^1 {n^*}{\tV}_x^2 {\rm d}x
       + \frac{2}{3(2\mu+5)}\|{\tE}\|^2\big)
  + \frac{\epsilon^2}{18}\nu \|{\tn}_{xx}\|^2 \nonumber \\
&&\hspace{-0.7in} + l_1 \|{\tn}\|_1^2
  + l_2 \|{\tE}\|_1^2
  + \nu \|{\tJ}_x\|^2
  + \frac{1}{\tau} \|{\tJ}\|^2 \nonumber \\
  && \hspace{-1in} \leqslant
   C(\delta_0+\delta_{t^*}+J_b)
    (\epsilon^2\|{\tn}_{xx}\|^2 + \|{\tn}\|_1^2 + \|{\tJ}\|_1^2
     + \|{\tE}\|_1^2 + \|{\tV}\|_1^2 + \|{\tV}_{xt}\|^2).
\end{eqnarray}

Next, we establish  estimates for higher order derivatives of
$({\tn},{\tJ},{\tE},{\tV})$.  To this end,
 differentiate $(\ref{3.2})_1, (\ref{3.2})_2$ and
$(\ref{3.2})_3$ with respect to $x$ and test the resultant equations
by $\mu{\tn}_x, {\tJ}_x$ and $\displaystyle
\frac{4}{3(2\mu+5)}{\tE}_x$, respectively to get
\begin{eqnarray}
&&\hspace{-0.8in} \mu\int_0^1 {\tn}_{xt}{\tn}_x {\rm d}x
  - \mu\nu\int_0^1 {\tn}_{xxx}{\tn}_x {\rm d}x
  + \mu\int_0^1 {\tJ}_{xx}{\tn}_x {\rm d}x  \nonumber \\
 && \hspace{-1in} =  \frac{\mu}{2}\frac{\rm d}{{\rm d}t}\|{\tn}_x\|^2
    + \mu\nu\|{\tn}_{xx}\|^2
    + \mu\int_0^1 {\tJ}_{xx}{\tn}_x {\rm d}x
  = 0, \label{new6}
\end{eqnarray}
\begin{eqnarray}
&& \hspace{-0.8in} \int_0^1 {\tJ}_{xt}{\tJ}_x {\rm d}x
  - \nu\int_0^1 {\tJ}_{xxx}{\tJ}_x {\rm d}x
  + \frac{1}{\tau}\int_0^1 {\tJ}_x^2 {\rm d}x
  - \frac{\epsilon^2}{18}\int_0^1 {\tn}_{xxxx}{\tJ}_x {\rm d}x \nonumber  \\
&& \hspace{-0.6in} + \frac23\int_0^1 {\tE}_{xx}{\tJ}_x {\rm d}x
  - \int_0^1 ({n^*}{\tV}_x)_x {\tJ}_x {\rm d}x
  + \mu\int_0^1 {\tn}_{xx} {\tJ}_x {\rm d}x  \nonumber \\
  && \hspace{-1in}=  \frac12\frac{\rm d}{{\rm d}t} \|{\tJ}_x\|^2
    + \nu \|{\tJ}_{xx}\|^2
    + \frac{1}{\tau} \|{\tJ}_x\|^2
    - \frac{\epsilon^2}{18}
      \int_0^1 {\tn}_{xxxx}(\nu{\tn}_{xx}-{\tn}_t) {\rm d}x
    + \frac23\int_0^1 {\tE}_{xx}{\tJ}_x {\rm d}x \nonumber  \\
&&  \hspace{-0.8in}  - \int_0^1 n^*_{x} {\tV}_x {\tJ}_x {\rm d}x
    - \int_0^1 {n^*} {\tV}_{xx} {\tJ}_x {\rm d}x
    - \mu \int_0^1 {\tn}_x {\tJ}_{xx} {\rm d}x \nonumber  \\
  && \hspace{-1in}= \frac12\frac{\rm d}{{\rm d}t}
    \big(\|{\tJ}_x\|^2+\frac{\epsilon^2}{18}\|{\tn}_{xx}\|^2\big)
    + \nu \|{\tJ}_{xx}\|^2
    + \frac{1}{\tau} \|{\tJ}_x\|^2
    + \frac{\epsilon^2}{18}\nu \|{\tn}_{xxx}\|^2
    + \frac23\int_0^1 {\tE}_{xx} {\tJ}_x {\rm d}x\nonumber   \\
&&   \hspace{-0.8in} - \int_0^1 n^*_{x} {\tV}_x {\tJ}_x {\rm d}x
    - \frac{1}{\lambda^2} \int_0^1 {n^*} {\tn}(\nu{\tn}_{xx}-{\tn}_t) {\rm d}x
    - \mu\int_0^1 {\tn}_x {\tJ}_{xx} {\rm d}x  \nonumber \\
  && \hspace{-1in} =\frac12\frac{\rm d}{{\rm d}t}
    \big( \|{\tJ}_x\|^2+\frac{\epsilon^2}{18}\|{\tn}_{xx}\|^2
          +\frac{1}{\lambda^2}\int_0^1 {n^*}{\tn}^2 {\rm d}x \big)
    + \nu \|{\tJ}_{xx}\|^2
    + \frac{1}{\tau} \|{\tJ}_x\|^2
    + \frac{\epsilon^2}{18}\nu \|{\tn}_{xxx}\|^2  \nonumber \\
&&  \hspace{-0.8in}  + \frac{\nu}{\lambda^2} \int_0^1 {n^*}{\tn}_x^2 {\rm d}x
    + \frac{\nu}{\lambda^2} \int_0^1 n^*_{x}{\tn}{\tn}_x {\rm d}x
    + \frac23\int_0^1 {\tE}_{xx} {\tJ}_x {\rm d}x
    - \int_0^1 n^*_{x} {\tV}_x {\tJ}_x {\rm d}x
    - \mu\int_0^1 {\tn}_x {\tJ}_{xx} {\rm d}x  \nonumber  \\
  &&  \hspace{-1in}=\int_0^1 g_{1x} {\tJ}_x {\rm d}x, \label{new7}
\end{eqnarray}
and
\begin{eqnarray}
&& \hspace{-0.8in} \frac{2}{3(2\mu+5)}\frac{\rm d}{{\rm d}t} \|{\tE}_x\|^2
    + \frac{4}{3(2\mu+5)}
      ( \nu\|{\tE}_{xx}\|^2 + \frac{2}{\tau} \|{\tE}_x\|^2)
    - \frac{4}{\tau(2\mu+5)} \int_0^1 {\tn}_x {\tE}_x {\rm d}x
      \nonumber  \\
   &&  \hspace{-1in}= \frac23 \int_0^1 {\tJ}_x {\tE}_{xx} {\rm d}x + \frac{4}{3(2\mu+5)} \int_0^1 g_{2x} {\tE}_x {\rm d}x. \label{new8}
\end{eqnarray}

Adding equations (\ref{new6})--(\ref{new8}) results in
\begin{eqnarray}
&&  \hspace{-0.8in} \frac12 \frac{\rm d}{{\rm d}t}
  \big( \mu\|{\tn}_x\|^2+\frac{\epsilon^2}{18}\|{\tn}_{xx}\|^2
        + \|{\tJ}_x\|^2 + \frac{1}{\lambda^2}\int_0^1 {n^*}{\tn}^2 {\rm d}x
        + \frac{4}{3(2\mu+5)}\|{\tE}_x\|^2 \big) \nonumber \\
&& \hspace{-0.8in}  + \mu\nu \|{\tn}_{xx}\|^2
  + \frac{\epsilon^2}{18}\nu \|{\tn}_{xxx}\|^2
  + \frac{\nu}{\lambda^2} \int_0^1 {n^*} {\tn}_x^2 {\rm d}x
  + \nu \|{\tJ}_{xx}\|^2
  + \frac{1}{\tau} \|{\tJ}_x\|^2
  + \frac{4\nu}{3(2\mu+5)} \|{\tE}_{xx}\|^2 \nonumber \\
&& \hspace{-0.8in}  + \frac{8}{3\tau(2\mu+5)} \|{\tE}_x\|^2
  - \frac{4}{\tau(2\mu+5)} \int_0^1 {\tn}_x {\tE}_x {\rm d}x \nonumber \\
  && \hspace{-1in}=  \int_0^1 g_{1x} {\tJ}_x {\rm d}x
    + \frac{4}{3(2\mu+5)} \int_0^1 g_{2x} {\tE}_x {\rm d}x
    - \frac{\nu}{\lambda^2} \int_0^1 n^*_{x}{\tn}{\tn}_x {\rm d}x
    + \int_0^1 n^*_{x} {\tV}_x {\tJ}_x {\rm d}x.  \label{3.17}
\end{eqnarray}
Since
\begin{eqnarray*}
 \left\{
  \begin{array}{ll}
   \displaystyle \|{\tn}_x\|^2
    = \int_0^1 \big(\int_0^x {\tn}_{yy}(y) {\rm d}y\big)^2 {\rm d}x
    \leqslant
      \int_0^1 {\tn}_{yy}^2 {\rm d}y
    = \|{\tn}_{xx}\|^2, \medskip \\
   \displaystyle \|{\tE}_x\|^2
    = \int_0^1 \big(\int_0^x {\tE}_{yy}(y) {\rm d}y\big)^2 {\rm d}x
    \leqslant
      \int_0^1 {\tE}_{yy}^2 {\rm d}y
    = \|{\tE}_{xx}\|^2,
  \end{array}
 \right.
\end{eqnarray*}
and
\begin{eqnarray*}
 \left| \frac{4}{\tau(2\mu+5)} \int_0^1 {\tn}_x {\tE}_x {\rm d}x \right|
  \leqslant
   \frac{4}{\tau(2\mu+5)} \|{\tn}_x\| \|{\tE}_x\|.
\end{eqnarray*}
Then similar to (\ref{3.15}),  under the condition (\ref{newcond1})
it holds that
\begin{eqnarray}\label{3.18}
&& \hspace{-0.8in} l_1 \|{\tn}_x\|_1^2 + l_2 \|{\tE}_x\|_1^2
  \leqslant
   \mu\nu \|{\tn}_{xx}\|^2
    + \frac{\nu}{\lambda^2} \int_0^1 \rho(x) {\tn}_x^2 {\rm d}x
    + \frac{4\nu}{3(2\mu+5)} \|{\tE}_{xx}\|^2
  \nonumber \\
&&   \hspace{1in}  + \frac{8}{3\tau(2\mu+5)} \|{\tE}_x\|^2 - \frac{4}{\tau(2\mu+5)} \int_0^1 {\tn}_x {\tE}_x {\rm d}x.
\end{eqnarray}
In addition, it is straightforward to check that
\begin{eqnarray}\label{3.19}
&&\hspace{-0.8in} \int_0^1 g_{1x} {\tJ}_x {\rm d}x
  + \frac{4}{3(2\mu+5)} \int_0^1 g_{2x} {\tE}_x {\rm d}x
  - \frac{\nu}{\lambda^2} \int_0^1 n^*_{x} {\tn} {\tn}_x {\rm d}x
 + \int_0^1 n^*_{x} {\tV}_x {\tJ}_x {\rm d}x  \nonumber \\
&& \hspace{-1in} \leqslant
  C(\delta_0+\delta_{t^*}+J_b)(\|{\tn}\|_2^2 + \epsilon^2\|{\tn}_{xxx}\|^2
                           + \|{\tJ}\|_2^2 + \|{\tE}\|_2^2 + \|{\tV}\|_2^2).
\end{eqnarray}
Combining (\ref{3.17})--(\ref{3.19}) leads to
\begin{eqnarray}\label{3.20}
&&\hspace{-0.5in} \frac12 \frac{\rm d}{{\rm d}t}
  \big( \mu\|{\tn}_x\|^2 + \frac{\epsilon^2}{18}\|{\tn}_{xx}\|^2 +
        \|{\tJ}_x\|^2 + \frac{1}{\lambda^2}\int_0^1 {n^*}{\tn}^2 {\rm d}x
        + \frac{4}{3(2\mu+5)}\|{\tE}_x\|^2 \big) \nonumber \\
&& \hspace{-0.4in}+ l_1\|{\tn}_x\|_1^2
  + l_2\|{\tE}_x\|_1^2
  + \nu\|{\tJ}_{xx}\|^2
  + \frac{1}{\tau}\|{\tJ}_x\|^2
  + \frac{\epsilon^2}{18}\nu\|{\tn}_{xxx}\|^2 \nonumber \\
&&\hspace{-0.7in}  \leqslant
    C(\delta_0+\delta_{t^*}+J_b)
     (\|{\tJ}\|_2^2 + \|{\tn}\|_2^2
      + \epsilon^2\|{\tn}_{xxx}\|^2 + \|{\tV}\|_2^2 + \|{\tE}\|_2^2).
\end{eqnarray}

Following the same procedure, we can also establish the   estimate for higher order derivatives
of $({\tn}, {\tJ}, {\tE}, {\tV})$ as follows
\begin{eqnarray}\label{3.21}
&&\hspace{-0.8in} \frac12 \frac{\rm d}{{\rm d}t}
  \left( \mu\|{\tn}_{xx}\|^2 + \frac{\epsilon^2}{18}\|{\tn}_{xxx}\|^2
        + \|{\tJ}_{xx}\|^2 + \frac{1}{\lambda^2}\int_0^1 {n^*}{\tn}_x^2 {\rm d}x
        + \frac{4}{3(2\mu+5)}\|{\tE}_{xx}\|^2 \right) \nonumber \\
&&\hspace{-0.6in} + l_1\|{\tn}_{xx}\|_1^2
  + l_2\|{\tE}_{xx}\|_1^2
  + \nu\|{\tJ}_{xxx}\|^2
  + \frac{1}{\tau}\|{\tJ}_{xx}\|^2
  + \frac{\epsilon^2}{18}\nu\|{\tn}_{xxxx}\|^2 \nonumber \\
&&\hspace{-1in}  \leqslant
    C(\delta_0+\delta_{t^*}+J_b) (\|{\tJ}\|_3^2 + \|{\tn}\|_3^2
      + \epsilon^2\|{\tn}_{xxxx}\|^2 + \|{\tV}\|_3^2 + \|{\tE}\|_3^2).
\end{eqnarray}

It follows from $(\ref{3.2})_1$ that
\begin{eqnarray}\label{3.22}
 \|\partial_x^k {\tJ}\|^2
& = \int_0^1
    (\nu\partial_x^{k+1}{\tn}-\partial_x^{k-1}{\tn}_t)^2 {\rm d}x \nonumber \\
 & = \int_0^1\big[ \nu^2(\partial_x^{k+1}{\tn})^2
                   + (\partial_x^{k-1}{\tn}_t)^2
                   - 2(\nu\partial_x^{k+1}{\tn})(\partial_x^{k-1}{\tn}_t)
                   \big] {\rm d}x \nonumber \\
& = \nu^2\|\partial_x^{k+1} {\tn}\|^2
    + \|\partial_x^{k-1} {\tn}_t\|^2
    + \nu\frac{\rm d}{{\rm d}t} \|\partial_x^k {\tn}\|^2, \ \
    k=1,2,3,\cdots
\end{eqnarray}
In addition, observe that
\begin{eqnarray}\label{3.7}
\hspace{-0.8in}\left\{
 \begin{array}{ll}
  \displaystyle
  \|{\tV}\|^2
   = \int_0^1\Big(\int_0^x {\tV}_y {\rm d}y\Big)^2 {\rm d}x
   \leqslant
    \|{\tV}_x\|^2
     = \int_0^1\Big(\int_0^x {\tV}_{yy} {\rm d}y\Big)^2 {\rm d}x
     \leqslant
      \|{\tV}_{xx}\|^2
       = \frac14 \|{\tn}\|^2,  \medskip \\
  \displaystyle
  \|{\tV}_{xt}\|^2
   = \int_0^1\Big(\int_0^x {\tV}_{yyt} {\rm d}y\Big)^2 {\rm d}x
   \leqslant
    \|{\tV}_{xxt}\|^2
     = \frac14 \|{\tn}_t\|^2.
 \end{array}
\right.
\end{eqnarray}
Collecting (\ref{3.16}), (\ref{3.20}) and (\ref{3.21}) then gives
\begin{eqnarray*}
& \frac12 \frac{\rm d}{{\rm d}t}
  \big( \mu\|{\tn}\|_2^2 + (\frac{\epsilon^2}{18}+\nu^2)\|{\tn}_x\|_2^2
        + \|{\tJ}\|_2^2 + \frac{4}{3(2\mu+5)}\|{\tE}\|_2^2\\
      &  + \lambda^2 \int_0^1 {n^*}({\tV}_x^2+{\tV}_{xx}^2+{\tV}_{xxx}^2) {\rm d}x
        \big)  \\
& + l_1\|{\tn}\|_3^2
  + l_2\|{\tE}\|_3^2
  + \frac{\nu}{2}(\nu^2+\frac{\epsilon^2}{9})\|{\tn}_{xx}\|_2^2
  + \frac{\nu}{3}\|{\tn}_t\|_2^2
  + \frac{\nu}{2}\|{\tJ}_x\|_2^2
  + \frac{1}{\tau}\|{\tJ}\|_2^2  \\
  \leqslant &
    C(\delta_0+\delta_{t^*}+J_b) (\|{\tn}\|_3^2+\epsilon^2\|{\tn}_{xx}\|_2^2
      + \|{\tE}\|_3^2 + \|{\tV}\|_3^2).
\end{eqnarray*}
Applying Gronwall's inequality to the above inequality and provided
that $\delta_0+\delta_{t^*}+J_b$ is sufficiently small we conclude  there
exist two positive constants $\varsigma_1$ and $\varsigma_2$ such
that
\begin{eqnarray*}
& \|{\tn}(t)\|_3^2
  + \|{\tJ}(t)\|_2^2
  + \|{\tE}(t)\|_2^2
  + \|{\tV}(t)\|_5^2
   \\ &+ \varsigma_2 \int_0^t e^{\varsigma_1 (s-t)}
    (\|{\tn}(s)\|_4^2 + \|{\tJ}(s)\|_3^2 + \|{\tE}(s)\|_3^2 + \|{\tn}_s(s)\|_2^2)
    {\rm d}s  \\
 \leqslant &
    e^{-\varsigma_1 t}(\|{\tn}_0\|_3^2 + \|{\tJ}_0\|_2^2 + \|{\tE}_0\|_2^2
    + \|{\tV}_0\|_3^2),
\end{eqnarray*}
which gives (\ref{3.5}). The proof is thus complete.
\end{proof}

$\mathbf{Proof \ of \ Theorem\, \ref{T1.2}:}$ \ First notice that
$\delta_{t^*}$ defined by (\ref{3.4}) is sufficiently small as long
as the initial data
$\|{\tn}_0\|_3+\|{\tJ}_0\|_2^2+\|{\tE}_0\|_2^2+\|{\tV}_0\|_3^2$ is
small enough.

Then, based on Lemma \ref{L3.1}, it follows directly from
(\ref{3.1}) that
\begin{eqnarray*}
& \|n(t)-{n^*}\|_3^2
  + \|J(t)-{J^*}\|_2^2
  + \|\mathcal{E}(t)-\mathcal{E}^*\|_2^2
  + \|V(t)-V^*\|_5^2 \\
 \leqslant &
  e^{-\varsigma_1 t} (\|{\tn}_0\|_3^2+\|{\tJ}_0\|_2^2+\|{\tE}_0\|_2^2+\|{\tV}_0\|_3^2),
\end{eqnarray*}
which further implies that $(n, J, \mathcal{E}, V)$ exists globally
in time and satisfies (\ref{1.9}).
\section{Acknowledgment} \label{sec:ack}

This work is supported by the Junta de Andaluc\'ia and FEDER, Spain
(No. P18-FR-4509); the Simons Foundation, USA (Collaboration Grants
for Mathematicians No. 429717); the National Nature Science
Foundation of China (No. 12171082); the fundamental research funds
for the central universities (No. 2232022G -13, 2232023G -13); the
Natural Science Foundation of Hubei Province, China (No. 2022CFB661)
and the Young and Middle-aged Talent Fund of Hubei Education
Department, China (No. Q20201307).

\section*{References}

\begin{appendix}
\section{List of parameters}\label{app:para}

\begin{eqnarray*}
a_2 &= & - \frac{1}{\tau} J_b
      - (1+\mu)\rho_x(x)
      + \frac23\frac{J_b^2 \rho_x}{\rho^2(x)}
      + \frac{\epsilon^2}{18} \frac{\rho_x^3(x)}{\rho^2(x)},
      \\
b_2 &=& \frac{1}{\tau}
      - \frac43\frac{J_b \rho_x}{\rho^2(x)},  \\
c_2 &= & \frac43 \frac{J_b^2 \rho_x^2}{\rho^3(x)}
      - \frac{\epsilon^2}{9}\frac{\rho_x^4(x)}{\rho^3(x)},
      \\
d_2 &=& \mu - \frac23 \frac{J_b^2}{\rho^2(x)}
      - \frac{\epsilon^2}{6}\frac{\rho_x^2(x)}{\rho^2(x)},
      \\
e_2 &=& \frac43\frac{J_b}{\rho(x)}
      - \frac{\epsilon^2}{9\nu}\frac{\rho_x(x)}{\rho(x)},  \\
f_2 &= & p V^*_{x}
      - \frac23(\frac{(J^*)^2}{n^*})_x
      + \frac{\epsilon^2}{18} \frac{(n^*_{x})^3}{(n^*)^2}
      + \frac{\epsilon^2}{9\nu} \frac{n^*_{x}}{n^*} J^*_{x}
      + \frac43 \frac{J_b}{\rho(x)}q_x \\
     && - \frac23 \left( \frac{J_b^2\rho_x}{\rho^2(x)}
        - \frac{2J_b^2\rho_x^2}{\rho^3(x)}p
        + \frac{J_b^2}{\rho^2}p_x
        + \frac{2J_b\rho_x}{\rho^2(x)}q \right) \\
    && - \frac{\epsilon^2}{18} \left(
        \frac{3\rho_x^2}{\rho^2(x)}p_x
        + \frac{2\rho_x^4}{\rho^3(x)}p
        + \frac{\rho_x^3(x)}{\rho^2(x)} \right)
      - \frac{\epsilon^2}{9\nu}\frac{\rho_x}{\rho(x)}q_x \\
    &= & \mathcal{O}(p^2 + q^2 + p_x^2 + q_x^2 + (V^*_x)^2), \\
a_3 &= &- \frac23 \frac{J_b^3}{\rho^3(x)}
      + \frac{\epsilon^2}{9} \frac{J_b\rho_x}{\rho^3(x)},
      \\
b_3 &=& \frac{2}{\tau}
      - \frac53 \frac{J_b \rho_x}{\rho^2(x)}, \\
c_3 &= &- \frac{3}{\tau}
      + \frac52 \frac{J_b\rho_x^2}{\rho^2(x)}
      - \frac{2J_b^3 \rho_x}{\rho^4(x)}
      + \frac{\epsilon^2}{3} \frac{J_b \rho_x^4}{\rho^4(x)},
      \\
d_3 &=& - \frac{5J_b}{2\rho(x)}
      - \frac{\epsilon^2}{3} \frac{J_b\rho_x^2}{\rho^3(x)},
      \\
e_3 &=&\mu + \frac{5}{2}
      - \frac{J_b^2}{\rho^2(x)}
      + \frac{\epsilon^2}{6\nu} \frac{J_b\rho_x}{\rho^2(x)}
      + \frac{\epsilon^2}{18} \frac{\rho_x^2}{\rho^2(x)},
      \\
h_3 &=& \frac{2J_b^2}{\rho^3(x)}
      - \frac{\epsilon^2}{9}\frac{\rho_x^3(x)}{\rho^3(x)},\\
f_3 &= & q V^*_{x} - \frac53 \left(\frac{J^*}{n^*}\mathcal{E}^* \right)_x
      + \frac13 \left(\frac{(J^*)^3}{(n^*)^2}\right)_x
      + \frac{\epsilon^2}{18}
        \left( \frac{J^*n^*_{xx}}{n^*}-\frac{J^*(n^*_{x})^2}{(n^*)^2} \right)_x
    \\
    &&   - \frac{\epsilon^2}{18\nu} \frac{J^*J^*_{xx}}{n^*} + \frac53 \big(\frac32\frac{J_b\rho_x}{\rho(x)}+\frac32 q_x
                 + \frac{J_b}{\rho(x)}r_x
                 + \frac32 \frac{\rho_x}{\rho(x)}q
                 - \frac32 \frac{J_b\rho_x^2}{\rho^2(x)}p \big)\\
    && - \frac53 \left( \frac32 \frac{J_b\rho_x}{\rho(x)}
                 - \frac{3J_b\rho_x^2(x)}{\rho^2(x)}p
                 + \frac{J_b\rho_x}{\rho^2(x)}r
                 + \frac32 \frac{\rho_x(x)}{\rho(x)}q
                 + \frac32 \frac{J_b}{\rho(x)}p_x \right)
      - \frac{J_b^2}{\rho^2(x)}q_x  \\
    && + \frac23 \left( \frac{J_b^3}{\rho^3(x)}
                 - \frac{3J_b^3\rho_x}{\rho^4(x)}p
                 + \frac{3J_b^2}{\rho^3(x)}q \right)
      + \frac{\epsilon^2}{6\nu}
        \frac{J_b\rho_x}{\rho^2(x)}q_x
      + \frac{\epsilon^2}{18}
        \frac{\rho_x^2(x)}{\rho^2(x)}q_x \\
    && - \frac{\epsilon^2}{9}\left( \frac{J_b\rho_x^3}{\rho^3(x)}
                             - \frac{3J_b\rho_x^4}{\rho^4(x)}p
                             + \frac{3J_b\rho_x^2}{\rho^3(x)}p_x
                             + \frac{\rho_x^3(x)}{\rho^3(x)}q \right)\\
  & = & \mathcal{O}(p^2+q^2+r^2+(V^*_x)^2+p_x^2+q_x^2+r_x^2),\\
\tilde{f}_3
&=&  \frac{u_{2xx}^{(1)}(u_2^{(1)}+J_b)}{u_1^{(1)}+\rho(x)}
  -\frac{\epsilon^2}{18\nu}
     \frac{u_{2xx}^{(2)}(u_2^{(2)}+J_b)}{u_1^{(2)}+\rho(x)}\\
 g_1 &= &
       - \frac{\epsilon^2}{18} \big[\frac{2(n^*+\tn)_x(n^*+\tn)_{xx}}{n^*+\tn}
         - \frac{2n^*_{x}n^*_{xx}}{n^*} - \frac{(n^*+\tn)_x^3}{n^*+\tn}
         + \frac{(n^*_{x})^3}{n^*} \big] \\
       && - \frac23 \big[\frac{(J^*+\tJ)^2}{n^*+\rho}-\frac{(J^*)^2}{n^*}\big]_x + V^*_{x}\rho
       + \rho \tV_x,  \\
 g_2 &= & - \frac53 \big[\frac{J^*+\tJ}{{n^*}+\tn}(\mathcal{E}^*+\tE)
                        -\frac{J^*}{n^*}\mathcal{E}^*
                        -\frac32 \tJ \big]_x\\
    &&   + \frac13\big[\frac{(J^*+\tJ)^3}{(n^*+\tn)^2}-\frac{(J^*)^3}{(n^*)^2}\big]_x
       + V^*_{x} \tJ
       + \tJ \tV_x
       + J^* \tV_x  \\
       &&+ \frac{\epsilon^2}{18}
         \big[\frac{(J^*+ \tJ)(n^*+\tn)_{xx}}{n^*+\tn}-\frac{J^*n^*_{xx}}{n^*}
              - \frac{(J^*+\tJ)(n^*+\tn)_x^2}{(n^*+\tn)^2}
              + \frac{J^* (n^*_{x})^2}{(n^*)^2}\big]_x, \\
               g_{\sigma 1} &= &
       - \frac{\epsilon^2}{18} \big[
         \frac{\big(({n^*}+\tn_{\sigma})_x^2\big)_x}{{n^*}+\tn_{\sigma}}
         - \frac{2n^*_{x}n^*_{xx}}{{n^*}}
         - \frac{({n^*}+\tn_{\sigma})_x^3}{{n^*}+\tn_{\sigma}}
         + \frac{(n^*_{x})^3}{{n^*}} \big] \\
       &&- \frac23 \big[
         \frac{({J^*}+\tJ_{\sigma})^2}{{n^*}+\tn_{\sigma}}-\frac{({J^*})^2}{{n^*}}\big]_x + V^*_{x}\tn_{\sigma}
       + \tn_{\sigma} \tV_{\sigma x},  \\
 g_{\sigma 2}
 & = & - \frac53 \big[\frac{{J^*}+\tJ_{\sigma}}{{n^*}+\tn_{\sigma}}(\mathcal{E}^*+\tE_{\sigma})
                     -\frac{{J^*}}{{n^*}}\mathcal{E}^*\big]_x
       + \frac13\big[
         \frac{({J^*}+\tJ_{\sigma})^3}{({n^*}+\tn_{\sigma})^2}-\frac{{J^*}^3}{{n^*}^2}\big]_x
      \\ && + V^*_{x}\tJ_{\sigma}
       + \tJ_{\sigma} \tV_{\sigma x}
       + {J^*} \tV_{\sigma x}
       + \frac52 \tJ_{\sigma x} \\
       &&+ \frac{\epsilon^2}{18}
         \big[\frac{({J^*}+\tJ_{\sigma})({n^*}+\tn_{\sigma})_{xx}}{{n^*}+\tn_{\sigma}}
              -\frac{{J^*}n^*_{xx}}{{n^*}}
              - \frac{({J^*}+\tJ_{\sigma})({n^*}+\tn_{\sigma})_x^2}{({n^*}+\tn_{\sigma})^2}
              + \frac{J^* (n^*_{x})^2}{{n^*}^2}\big]_x , \medskip \\
 \hat{g}_1 &= & - \frac23 \big[ \frac{(J^*+ \tJ_1)^2}{n^*+\tn_1}
                 -\frac{(J^*+\tJ_2)^2}{n^*+\tn_2} \big]_x
                 + V^*_{x}(\tn_1 - \tn_2)
                 + \tn_1 \tV_{1x}
                 - \tn_2 \tV_{2x}  \\
      &&  - \frac{\epsilon^2}{18}
           \big[\frac{\big((n^*+\tn_1)_x^2\big)_x}{n^*+\tn_1}
         - \frac{\big((n^*+\tn_2)_x^2\big)_x}{n^*+\tn_2}
         - \frac{(n^*+\tn_1)_x^3}{n^*+\tn_1}
         + \frac{(n^*+\tn_2)_x^3}{n^*+\tn_2} \big]  \medskip\\
 \hat{g}_2
  &= & - \frac53 \big[\frac{J^*+\tJ_1}{n^*+\tn_1}(\mathcal{E}^*+\tE_1)
      - \frac{J^*+\tJ_2}{n^*+\tn_2}(\mathcal{E}^*+\tE_2)\big]_x
      + \frac13 \big[ \frac{(J^*+\tJ_1)^3}{(n^*+\tn_1)^2}
                     - \frac{(J^*+\tJ_2)^3}{(n^*+\tn_2)^2} \big]_x \\
    && + \frac{\epsilon^2}{18}
         \big[\frac{(J^*+\tJ_1)(n^*+\tn_1)_{xx}}{n^*+\tn_1}
              - \frac{(J^*+\tJ_2)(n^*+\tn_2)_{xx}}{n^*+\tn_2} \\
    && \qquad\quad
              - \frac{(J^*+\tJ_1)(n^*+\tn_1)_x^2}{(n^*+\tn_1)^2}
              + \frac{(J^*+\tJ_2)(n^*+\tn_2)_x^2}{(n^*+\tn_2)^2}
              \big]_x \\
    && + V^*_{x}(\tJ_1 - \tJ_2)
      + \tJ_1 \tV_{1x} - \tJ_2 \tV_{2x}
      + J^*(\tV_{1x} - \tV_{2x}).
\end{eqnarray*}
\end{appendix}

\end{document}